\newcommand{\R}{\varmathbb{R}}
\newcommand{\Z}{\varmathbb{Z}}
\newcommand{\Rn}{{\varmathbb{R}^n}}
\newcommand{\Ha}{\mathcal{H}}
\newcommand{\M}{\mathcal{M}}
\newcommand{\Mu}{\widetilde{\mathcal{M}}}
\newcommand{\Po}{\mathcal{P}}
\newcommand{\Ri}{\mathcal{R}}
\newcommand{\ve}{\varepsilon}
\def\dist{\qopname\relax o{dist}}
\def\inte{\qopname\relax o{int}}
\def\phi{\varphi}
\def\L{{\textsc{n}L}}
\let\oldmarginpar\marginpar
\renewcommand\marginpar[1]{\-\oldmarginpar[\raggedleft\footnotesize #1]%
{\raggedright\footnotesize #1}}
\theoremstyle{plain}
\newtheorem{theorem}[equation]{Theorem}
\newtheorem{lemma}[equation]{Lemma}
\newtheorem{proposition}[equation]{Proposition}
\newtheorem{corollary}[equation]{Corollary}
\theoremstyle{definition}
\theoremstyle{remark}
\newtheorem{remark}[equation]{Remark}
\numberwithin{equation}{section}
\title[On Hausdorff content maximal operator and Riesz potential]{On Hausdorff content maximal operator and Riesz potential for non-measurable functions}
\author{Petteri Harjulehto}
\address[Petteri Harjulehto]{Department of Mathematics and Statistics,
FI-00014 University of Helsinki, Finland}
\email{petteri.harjulehto@helsinki.fi}
\author{Ritva Hurri-Syrj\"anen}
\address[Ritva Hurri-Syrj\"anen]{Department of Mathematics and Statistics,
FI-00014 University of Helsinki, Finland}
\email{ritva.hurri-syrjanen@helsinki.fi}
\date{\today}
\begin{document}

\keywords{Choquet integral, Hausdorff content, Hausdorff capacity, maximal operator, non-measurable function, Riesz potential}
\subjclass[2020]{42B25 (28A25, 28A20)}

\begin{abstract} 
We introduce Riesz potentials for non-Lebesgue measurable functions by taking  the integrals  in the sense of Choquet 
with respect to Hausdorff content and
prove boundedness results for these operators. 
Some earlier results are recovered or extended 
now using integrals taken in the sense of Choquet with respect to Hausdorff content.
Some earlier results also for maximal operators are considered, but now for non-measurable functions.
\end{abstract}

\maketitle

\section{Introduction}

The Riemann and Lebesgue integral theories  were developed for Lebesgue measurable functions.
 Ways to study
these integral theories  when functions are not Lebesgue measurable  appear in  \cite{Hil17, Jef32, Zak66}.
Also, the
Choquet integral theory  has been 
concentrated to the context when functions are  continuous  or quasicontinuous  or at least Lebesgue measurable. However, this 
 seems not to be necessary
 in many cases when considering properties of Choquet integrals with respect to Hausdorff content.
D.\ Denneberg  studies Choquet integrals  in his 
monograph  \cite{Den94}  extensively  without assuming that  functions are Lebesgue measurable. 
The properties of Choquet integrals with respect to capacities
are stated in  Denneberg's monograph \cite{Den94} with minimal assumptions on capacities 
and functions.
We are interested in Choquet integrals with respect to Hausdorff content, that is Hausdorff capacity.
By \cite[Theorem~6.3]{Den94}
Choquet integral with respect to Hausdorff content is quasi-sublinear  for  all non-negative functions. 
We refer to Theorem~\ref{DennebergThm}. 
There is no need to assume that functions are Lebesgue measurable. 

The Choquet integral  was introduced by G.\ Choquet \cite{Cho53} and  applied  by D.\ R.\ Adams  in the study of
nonlinear potential theory \cite{Adams1986, Adams1998, Adams2015, AdaX2020}. Later on, 
J. Xiao has studied Choquet integrals extensively with his coauthors \cite{DX, SXY}, as well as J. Kawabe \cite{Kawabe2019}.
H. Saito, H. Tanaka,  and T. Watanabe have  results on the classical maximal  operators 
when  integrals  are taken in sense of Choquet with respect to Hausdorff content
\cite{SaitoTanakaWatanabe2016, Saito2019, SaitoTanakaWatanabe2019, SaitoTanaka2022}.
Recently, there has arisen a new interest in Choquet integrals and their properties. We refer to
\cite{Tang, PonceSpector2020, MartinezSpector2021, OoiPhuc2022, ChenOoiSpector2023, ChenSpector2023, 
HH-S_JFA,HH-S_AAG, PonceSpector2023, HH-S_AWM, HH-S_La, HCYZ}.

 A new Hausdorff content maximal operator
 $\M^\delta$ by using the Choquet integral with respect to the $\delta$-dimensional Hausdorff content $\Ha^{\delta}_\infty$ in \eqref{new}  for $\delta\in (0,n]$
 was introduced  by Y.-W. Chen, K. H. Ooi, and D. Spector \cite{ChenOoiSpector2023}. 
 Boundedness results and their corollaries were  proved there.
 We continue  this study of the Hausdorff content maximal  function $\M^{\delta}f$ and its fractional counterpart, 
 and our emphasis is on functions, which are not necessarily Lebesgue measurable.

 By introducing  a  Hausdorff content Riesz potential 
$\Ri^{\delta}_{\alpha}f$  in 
\eqref{newRiesz}, using Choquet integrals with respect to 
the $\delta$-dimensional Hausdorff content $\Ha^{\delta}_\infty$,
 we give boundedness results for $\Ri^{\delta}_{\alpha}$ with $\delta\in (0,n)$ and $\Ri^n_{\alpha}$ separately.
For example, a boundedness result  for suitable functions which are not necessarily Lebesgue measurable is given in 
Theorem \ref{thm:Choquet-Hedberg}.
Our corresponding result for Lebesgue measurable functions,
Theorem \ref{thm:Choquet-Hedberg-measurable} recovers the classical result \cite[Theorem 2.8.4]{Ziemer89}.
Theorem \ref{thm:GeneralRiesz_bdd} could be seen as an extension of \cite[Theorem 2.8.4]{Ziemer89} to the Hausdorff content Riesz potential. 

In Sections \ref{sec:Hausdorff} and \ref{sec:Choquet} some basic properties of Hausdorff content and 
Choquet integral with respect to Hausdorff content are recalled.
Also, we consider a Lebesgue type space formed by non-measurable functions.
 Hausdorff content maximal operators are considered in Section~\ref{Section_Maximal}.
 Hausdorff content Riesz potential results are presented in Section~\ref{Section_RieszPotential}.

\section{Hausdorff content}\label{sec:Hausdorff}

We recall the definition of the  $\delta$-dimensional Hausdorff content for any given set $E$ 
in $\Rn$,  \cite[2.10.1, p.~169]{Federer}, \cite{Adams1998, Adams2015}
and also  its dyadic counterpart  \cite{YangYuan08}.
An  open ball centred at $x$ with radius $r>0$ is written as $B(x,r)$.

Let $E$ be a set in $\Rn$, $n \ge 1$. Suppose that
$\delta \in (0, n]$.
The $\delta$-dimensional Hausdorff content of $E$ is defined by
\begin{equation}
\Ha_\infty^{\delta} (E) := \inf \bigg\{ \sum_{i=1}^\infty r_i^{\delta}: E \subset \bigcup_{i=1}^\infty B(x_i, r_i)\bigg\}\,,\label{HausdorffC}
\end{equation}
where the infimum is taken over all  
countable (or finite) collections of balls  that cover $E$.
The quantity \eqref{HausdorffC} is also called  the $\delta$-Hausdorff capacity.

If the infimum is taken over all 
countably (or finite) collections of dyadic cubes such that the interior of the union of the cubes covers $E$, the 
dyadic counterpart of $\Ha_{\infty}^{\delta}$ is 
is given in \cite{YangYuan08}, i.e. 
\begin{equation*}
\tilde{\Ha}_\infty^{\delta} (E) := \inf \bigg\{ \sum_{i=1}^\infty \ell (Q_i)^{\delta}: E \subset \inte\big(\bigcup_{i=1}^\infty Q_i \big)\bigg\}\label{HausdorffCD}\,.
\end{equation*}
Here $\ell (Q)$ is the side length of a cube $Q$. 
The cube covering is called
a dyadic cube covering or dyadic cube cover.

Let  $\Po(\Rn)$  be a power set of $\Rn$ and $c: \Po(\Rn) \to [0, \infty]$  a set function.
The following properties are useful:
 
\begin{enumerate}
\item[(C1)] $c(\emptyset) =0$;
\item[(C2)] if $A \subset B$ then $c(A) \le c(B)$;
\item[(C3)]  if $E \subset \Rn$ then 
\[
c(E) = \inf_{E \subset U \text{ and }U \text{ is open}}c(U); 
\]
\item[(C4)] if $(A_i)$ is any sequence of sets then
\[
c\Big(\bigcup_{i=1}^\infty A_i \Big)
\le  \sum_{i=1}^\infty c(A_i);
\]
\item[(C5)] if $(K_i)$ is a decreasing sequence of compact sets then 
\[
c\Big(\bigcap_{i=1}^\infty K_i \Big)
= \lim_{i \to \infty} c(K_i);
\]
\item[(C6)] if $(E_i)$ is an increasing sequence of  sets then 
\[
c\big(\bigcup_{i=1}^\infty E_i \big)
= \lim_{i \to \infty} c(E_i).
\]
\end{enumerate}

Property (C2) is called \emph{monotonicity}  of a set function  and property (C4)  is called \emph{subadditivity}
 of a set function.
Moreover,  a set function $c$ is \emph{strongly subadditive}, if
\begin{equation}\label{st}
c(A_1 \cup A_2) + c(A_1 \cap A_2) \le c(A_1) + c(A_2)
\end{equation}
for all  subsets $A_1, A_2$ in $\Rn$. 
Property  \eqref{st}  is called submodularity  in  \cite[p. 16]{Den94}.

Results from
\cite[Theorem 2.1, Propositions 2.3 and 2.4]{YangYuan08} state that
the  $\delta$-dimensional dyadic Hausdorff content 
$\tilde{\Ha}_\infty^{\delta}$ satisfies  properties (C1), (C2), (C4), (C5), (C6),  which means that it is a Choquet capacity
 \cite{Cho53}.
 Moreover $\tilde{\Ha}_\infty^{\delta}$ is strongly subadditive.
 We  recall that there exist finite constants 
$c_1(n)>0$ and $c_2(n)>0$ such that
\begin{equation}\label{comparable}
c_1(n)\Ha_\infty^{\delta}(E)\le\tilde{\Ha}_\infty^{\delta}(E) \le c_2(n)\Ha_\infty^{\delta}(E)
\end{equation}
for any  set $E$ in $\Rn$.

On the other hand,
the $\delta$-dimensional Hausdorff content 
$\Ha_\infty^{\delta}$ satisfies properties (C1)--(C5), which means that it is an outer capacity in the sense of Meyers \cite[p. 257]{Mey70}. 
But
 $\Ha_\infty^{\delta}$ is not a capacity in the sense of Choquet \cite{Cho53},
 since it does not satisfies (C6). 
We refer to  \cite[p.~30]{Dav56} and \cite{Dav70, SioS62}.

Hence we can conclude the following remark.
\begin{remark}\label{PropertiesHausdorff}
The  $\delta$-dimensional dyadic Hausdorff content 
$\tilde{\Ha}_\infty^{\delta}$ satisfies  properties (C1), (C2), (C4), (C5), (C6), and 
$\tilde{\Ha}_\infty^{\delta}$
is strongly subadditive.
The $\delta$-dimensional Hausdorff content 
$\Ha_\infty^{\delta}$ satisfies properties (C1)--(C5).
\end{remark}

Let us recall that the $\delta$-dimensional  Hausdorff measure for a subset $E$ of  $\R^n$ is defined as
\[
\Ha^\delta (E) := \lim_{\rho \to 0^+}  \inf \bigg\{ \sum_{i=1}^\infty r_i^{\delta}: E \subset \bigcup_{i=1}^\infty B(x_i, r_i) \text{ and } r_i \le \rho \text{ for all } i\bigg\},
\]
where the infimum is taken over 
 all  countable  (or finite) balls of radius at most $\rho$ that cover $E$. 
 This covering is called a $\rho$-cover.
  As an outer measure 
 $\Ha^\delta$ is defined for all sets $E$ in $\Rn$. 
The $\delta$-dimensional Hausdorff content $\Ha^\delta_\infty$ and the $\delta$-dimensional Hausdorff measure $\Ha^\delta$ have the same null sets 
\cite[Chapter 3, p.~14]{Adams2015}.
 The $n$-dimensional Hausdorff content as a set function  is  equivalent to the $n$-dimensional Hausdorff outer measure
\cite[Proposition 2.5]{HH-S_JFA}.

We recall two results from \cite{Ponce},  namely  the value of
the $\delta$-dimensional Hausdorff content of a ball $B(x,r)$  in $\Rn$ and metric additivity.

\begin{proposition}\cite[Proposition B.2.]{Ponce}\label{lem:Antti}
Let $x\in\Rn$ and $r>0$. 
If $\delta \in (0, n]$, then
$\Ha^\delta_\infty (B(x, r)) = r^\delta$.
\end{proposition}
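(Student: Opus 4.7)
The plan is to prove the asserted equality by establishing both inequalities separately. For the upper bound $\Ha_\infty^\delta(B(x,r)) \le r^\delta$, I would use the single-ball cover $\{B(x,r)\}$ in the definition \eqref{HausdorffC}, which immediately produces the value $r^\delta$ in the infimum.

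The content of the proposition lies in the reverse inequality. I would fix an arbitrary countable cover $B(x,r) \subset \bigcup_i B(x_i, r_i)$ and argue that $\sum_i r_i^\delta \ge r^\delta$ by a case analysis. If some ball of the cover satisfies $r_i \ge r$, the inequality is immediate since $\sum_j r_j^\delta \ge r_i^\delta \ge r^\delta$, so we may assume every $r_i < r$.

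In this remaining case I would invoke $n$-dimensional Lebesgue measure to convert the covering relation into a numerical inequality: monotonicity and countable subadditivity yield
\[
|B(x,r)| \le \sum_i |B(x_i, r_i)|,
\]
and cancelling the volume of the Euclidean unit ball in $\Rn$ gives $r^n \le \sum_i r_i^n$. To pass from the exponent $n$ down to $\delta$, I would exploit that $\delta - n \le 0$ together with $r_i \le r$, which forces $r_i^{\delta - n} \ge r^{\delta - n}$. Multiplying by $r_i^n$ and summing yields
\[
\sum_i r_i^\delta = \sum_i r_i^n \cdot r_i^{\delta - n} \ge r^{\delta - n} \sum_i r_i^n \ge r^{\delta - n}\, r^n = r^\delta.
\]
Taking the infimum over all admissible covers then completes the argument.

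I anticipate no serious obstacle; the only subtlety is the preliminary case split isolating balls larger than $B(x,r)$, after which the Lebesgue volume comparison combined with the elementary exponent manipulation exploiting $\delta \le n$ finishes the proof. The argument also makes transparent why the conclusion fails for $\delta > n$: one loses the sign condition $\delta - n \le 0$ that was crucial in bounding $\sum_i r_i^\delta$ from below by $\sum_i r_i^n$.
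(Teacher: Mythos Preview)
Your argument is correct and is precisely the standard proof of this fact; it is essentially the argument in \cite[Proposition B.2]{Ponce} to which the paper defers (the paper does not give its own proof here, noting only that Ponce's argument carries over despite a slightly different normalisation of $\Ha^\delta_\infty$). No changes are needed.
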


The proof goes as in \cite[Proposition B.2.]{Ponce}, although the definition of the Hausdorff content differs from the one we use.
We thank Antti Käenmäki for his help with Proposition~\ref{lem:Antti}.

\begin{proposition}\cite[Corollary B.13]{Ponce}
Let $A$ and $B$  be disjoint sets in $\Rn$ such that $\dist (A,B)>0$. If $\delta\in (0,n]$, then
\begin{equation*}
\Ha^\delta_\infty (A\cup B) =\Ha^\delta_\infty (A)+ \Ha^\delta_\infty (B).
\end{equation*}
\end{proposition}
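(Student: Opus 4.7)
The inequality $\Ha^\delta_\infty(A\cup B)\le \Ha^\delta_\infty(A)+\Ha^\delta_\infty(B)$ is immediate from the subadditivity property~(C4) of $\Ha^\delta_\infty$ listed in Remark~\ref{PropertiesHausdorff}, so the substance of the proposition is the reverse inequality. I would fix $\varepsilon>0$, set $d:=\dist(A,B)>0$, and choose a countable open ball cover $\{B(x_i,r_i)\}_{i\in\N}$ of $A\cup B$ with
\[
\sum_{i} r_i^{\delta}<\Ha^\delta_\infty(A\cup B)+\varepsilon.
\]

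The next step is to classify the balls by which of $A$ and $B$ they meet: let $I_A$ consist of the indices $i$ with $B(x_i,r_i)\cap A\ne\emptyset$ and $B(x_i,r_i)\cap B=\emptyset$, let $I_B$ be the symmetric class, and let $I_{AB}$ consist of the indices for which $B(x_i,r_i)$ meets both $A$ and $B$. Since any ball in $I_{AB}$ contains two points at distance at least $d$, its radius satisfies $2r_i\ge d$, so the ``ambiguous'' balls are automatically large. The subfamilies indexed by $I_A\cup I_{AB}$ and $I_B\cup I_{AB}$ are ball covers of $A$ and $B$ respectively, and using them directly in the definition of $\Ha^\delta_\infty$ gives
\[
\Ha^\delta_\infty(A)+\Ha^\delta_\infty(B)\le \sum_{i} r_i^{\delta}+\sum_{i\in I_{AB}} r_i^{\delta}.
\]
If $I_{AB}$ were empty the proof would end here after letting $\varepsilon\to 0^+$, so the whole content of the proposition lies in eliminating the double count of the large balls.

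The main obstacle is therefore to replace each ambiguous ball $B(x_i,r_i)$ by two balls, one covering $B(x_i,r_i)\cap A$ and one covering $B(x_i,r_i)\cap B$, whose $\delta$-th powers together do not exceed $r_i^{\delta}$ up to an error that can be absorbed into the $\varepsilon$-budget. This is the delicate geometric step of the argument, and it uses the separation $d>0$ in an essential way: the $A$-trace and the $B$-trace of an ambiguous ball are forced to lie in sector-like regions that stay apart by at least $d$, which prevents their enclosing balls from both having radius close to $r_i$. Naive enclosing-ball replacement is insufficient, and for $\delta<n$ one cannot refine a single large ball by small balls at bounded $\delta$-cost; the correct argument is the one carried out in \cite[Corollary~B.13]{Ponce}. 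I would follow that scheme, adapting it from Ponce's definition of the Hausdorff content to the radius-based open-ball definition~\eqref{HausdorffC} in the same spirit as the adaptation needed in Proposition~\ref{lem:Antti}.

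Once this refinement is available, the new families are ball covers of $A$ and $B$ whose total $\delta$-cost is bounded by $\sum_i r_i^\delta+\varepsilon$, so
\[
\Ha^\delta_\infty(A)+\Ha^\delta_\infty(B)\le \sum_{i} r_i^{\delta}+\varepsilon<\Ha^\delta_\infty(A\cup B)+2\varepsilon,
\]
and letting $\varepsilon\to 0^+$ yields the reverse inequality, completing the proof.
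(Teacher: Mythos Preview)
The paper gives no proof of this proposition; it merely cites \cite[Corollary~B.13]{Ponce}. So there is nothing on the paper's side to compare your argument against.

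More importantly, the step you flag as ``delicate'' and defer to Ponce cannot be carried out, because the proposition is \emph{false} for the Hausdorff content $\Ha^\delta_\infty$ as defined in~\eqref{HausdorffC}. Take $n=2$, $\delta=1$, let $A=\{e^{i\theta}:0\le\theta\le\pi\}$ be the closed upper unit semicircle and, for small $\ve>0$, let $B=\{e^{i\theta}:\pi+\ve\le\theta\le 2\pi-\ve\}$. These sets are compact and disjoint with $\dist(A,B)=2\sin(\ve/2)>0$. Projection onto the first coordinate is $1$-Lipschitz and sends each ball of radius $r$ into an interval of length $2r$; since those intervals must cover the projection $[-1,1]$ of $A$, every ball cover $\{B(x_i,r_i)\}$ of $A$ satisfies $\sum_i 2r_i\ge 2$, whence $\Ha^1_\infty(A)\ge 1$. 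Covering $A$ by $B(0,1+\eta)$ and letting $\eta\to 0^+$ gives the reverse inequality, so $\Ha^1_\infty(A)=1$. The same projection argument yields $\Ha^1_\infty(B)\ge\cos\ve$. On the other hand $A\subset A\cup B\subset B(0,1+\eta)$ forces $\Ha^1_\infty(A\cup B)=1$ as well. Hence
\[
\Ha^1_\infty(A\cup B)=1<1+\cos\ve\le \Ha^1_\infty(A)+\Ha^1_\infty(B),
\]
contradicting the asserted equality.

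This is not an artefact of the ball-radius normalisation; the same example defeats the diameter-based variant. The point is that $\Ha^\delta_\infty$ is \emph{not} a metric outer measure: unlike the Hausdorff measure $\Ha^\delta$, a single large covering ball can absorb well-separated pieces at one stroke. Your instinct that something goes wrong precisely when one tries to split the ambiguous balls was correct --- the obstruction is genuine, not merely technical. The citation most likely refers to the Hausdorff measure $\Ha^\delta$ (for which metric additivity is classical) or to a statement with extra hypotheses, and it appears to have been transcribed to $\Ha^\delta_\infty$ in error. Fortunately the proposition is not used elsewhere in the paper.
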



\section{Choquet integral}\label{sec:Choquet}

 We recall the definition of Choquet integral and
pay attention to a variety of names which have been given to different properties  related to Choquet integrals with respect to set functions.
 Let $\Omega$ be a subset of $\Rn$, $n \ge 1$.
For any function $f:\Omega\to [0,\infty]$ the integral in the sense of Choquet with respect to Hausdorff content is defined by
\begin{equation}\label{IntegralDef}
\int_\Omega f(x) \, d H := \int_0^\infty H\big(\{x \in \Omega : f(x)>t\}\big) \, dt, 
\end{equation}
where  $H$ could be $H:=\Ha^{\delta}_\infty$ or $H:=\tilde{\Ha}_\infty^{\delta}$.
Since  $\Ha^{\delta}_\infty$  and $\tilde{\Ha}_\infty^{\delta}$ are monotone  set functions,
the corresponding distribution function
$t \mapsto H\big(\{x \in \Omega : f(x)>t\}\big)$ 
for any function $f:\Omega\to [0,\infty]$
is decreasing with respect to $t$.
By decreasing property  the  distribution function 
$t \mapsto H\big(\{x \in \Omega : f(x)>t\}\big)$ is measurable
 with respect to  the 1-dimensional  Lebesgue measure. Thus, $\int_0^\infty H\big(\{x \in \Omega : f(x)>t\}\big) \, dt$ 
 is well defined as a Lebesgue integral. 
The right-hand side of \eqref{IntegralDef} can be understood  also as an improper Riemann integral.
 The wording {\it{any function}} means that the only requirement is that the function is well defined. We emphasise that the Choquet  
 integral is well defined for any non-measurable set in $\Rn$ and
for  any non-measurable function.
A characteristic function of a non-measurable set is an example of a non-measurable function.
For an original construction of a non-measurable set by G. Vitali we refer to \cite{Vitali}.
We recall that Choquet integral is  a nonlinear integral and used in non-additive measure theory.

The Choquet integral with respect to Hausdorff content has the following properties.

\begin{lemma}\label{lem:Integral-basic-properties}
Suppose that $\Omega$ is a subset of $\Rn$,  $f, g : \Omega \to [0, \infty]$ are non-negative functions defined on $\Omega$, and $\delta \in (0,n]$.
If  $H:=\Ha^{\delta}_\infty$ or $H:=\tilde{\Ha}_\infty^{\delta}$, then
\begin{enumerate}
\item[(I1)] $ \displaystyle \int_\Omega a f(x) \, d H = a \int_\Omega  f(x) \, d H$ with any $a\ge 0$;
\item[(I2)] $\displaystyle \int_\Omega f(x) \, d H=0$ if and only if $f(x)=0$  for $H$-almost every $x\in \Omega$;
\item[(I3)] if $E\subset \Rn$, then $\displaystyle \int_\Omega \chi_E(x) \, d H = H(\Omega \cap E)$;
\item[(I4)] if $A\subset B \subset \Omega$, then $\int_A f(x) \, d H \le \int_B f(x) \, d H$;
\item[(I5)] if $0\le f\le g$, then $\displaystyle \int_\Omega f(x) \, d H\le \int_\Omega g(x) \, d H$;
\item[(I6)] $\displaystyle \int_\Omega f(x)+g(x) \, d H \le 2\Big(\int_\Omega f(x) \, d H + \int_\Omega g(x) \, d H\Big)$;
\item[(I7)] $\displaystyle \int_\Omega f(x)g(x) \, d H \le 2\Big(\int_\Omega f(x)^p \, d H\Big)^{1/p} \Big( \int_\Omega g(x)^q \, d H\Big)^{1/q}$   when   $p, q>1$ and  $\frac{1}{p}+\frac{1}{q}=1$.
\end{enumerate}
\end{lemma}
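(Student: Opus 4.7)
The plan is to derive each item directly from the layer-cake definition \eqref{IntegralDef} together with the monotonicity (C2) and countable subadditivity (C4) of $H$, both available for $\Ha^\delta_\infty$ and $\tilde{\Ha}^\delta_\infty$ by Remark~\ref{PropertiesHausdorff}. I would begin with the one-line items. For (I1) with $a>0$, the change of variable $s=t/a$ in the outer Lebesgue integral, applied to the identity $\{af>t\}=\{f>t/a\}$, produces the factor $a$; the case $a=0$ is immediate. For (I3) the distribution function of the restriction of $\chi_E$ to $\Omega$ equals $H(\Omega\cap E)$ on $[0,1)$ and vanishes on $[1,\infty)$, so the integral collapses to $H(\Omega\cap E)$. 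For (I4) and (I5) the inclusions $\{x\in A:f(x)>t\}\subset\{x\in B:f(x)>t\}$ and $\{f>t\}\subset\{g>t\}$ combined with (C2) and integration in $t$ suffice.

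For (I2), the backward direction is immediate: if $H(\{f>0\})=0$, then (C2) gives $H(\{f>t\})=0$ for every $t>0$, so the Choquet integral vanishes. For the forward direction, the non-negative decreasing distribution function $t\mapsto H(\{f>t\})$ has zero Lebesgue integral on $(0,\infty)$, hence vanishes at almost every $t$; being decreasing, for each $t>0$ one picks $s<t$ with $H(\{f>s\})=0$ and applies (C2) to get $H(\{f>t\})=0$. Writing $\{f>0\}=\bigcup_{k\ge1}\{f>1/k\}$ and invoking (C4) then yields $H(\{f>0\})=0$. For (I6), the set-theoretic inclusion $\{f+g>t\}\subset\{f>t/2\}\cup\{g>t/2\}$ together with (C2) and (C4) gives $H(\{f+g>t\})\le H(\{f>t/2\})+H(\{g>t/2\})$; integrating and substituting $s=t/2$ in each of the two terms produces the factor $2$.

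For (I7), I would mimic the classical derivation of H\"older's inequality from Young's pointwise inequality $ab\le a^p/p+b^q/q$. Set $A=(\int_\Omega f^p\,dH)^{1/p}$ and $B=(\int_\Omega g^q\,dH)^{1/q}$ and first assume $0<A,B<\infty$. Applying Young's inequality pointwise to $f/A$ and $g/B$, then (I5) to integrate, (I6) to split the resulting sum at the cost of a factor $2$, and (I1) to pull out the normalising constants yields $(AB)^{-1}\int_\Omega fg\,dH\le 2(1/p+1/q)=2$. The degenerate cases $A=0$ or $B=0$ (in which (I2) forces $fg=0$ $H$-almost everywhere and hence $\int_\Omega fg\,dH=0$) and $A=\infty$ or $B=\infty$ are dispatched separately. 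The one genuinely conceptual point, and the step I expect to be the main obstacle, is recognising that the constant $2$ on the right of (I7) is \emph{forced} by the factor $2$ in (I6), that is, by the fact that $H$ is merely subadditive and not additive; for genuinely additive set functions the same argument recovers the classical H\"older inequality with constant $1$.
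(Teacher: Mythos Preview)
Your argument is correct in every item: (I1)--(I5) follow immediately from the layer-cake definition and monotonicity (C2); (I6) follows from the inclusion $\{f+g>t\}\subset\{f>t/2\}\cup\{g>t/2\}$ together with (C4); and your derivation of (I7) via Young's inequality, (I5), (I6), and (I1) is clean and complete, with the degenerate cases handled correctly through (I2).

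The paper itself does not prove this lemma at all; it simply cites \cite{Adams1998}, \cite[Chapter~4]{Adams2015}, and \cite{CerMS11}. So there is no ``paper's approach'' to compare against. Your write-up is therefore strictly more informative than what the paper provides, and your closing observation---that the constant $2$ in (I6) and (I7) is forced by mere subadditivity of $H$, whereas strong subadditivity (as enjoyed by $\tilde{\Ha}^\delta_\infty$) would yield constant $1$ via Theorem~\ref{DennebergThm}---is a worthwhile clarification that the paper leaves implicit.
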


For these properties
we refer to  \cite{Adams1998}, \cite[Chapter 4]{Adams2015}, and \cite{CerMS11}. 

\begin{remark}\label{I1_I6}
(1) By property (I4)  Choquet integral is increasing with respect to the set over which the integral is taken and
by  (I5)  Choquet integral is increasing with respect to the integrand.\\
(2) Property (I6) means that  Choquet integral is quasi-subadditive and
properties
(I1) and  (I6)  imply that Choquet integral is  quasi-sublinear. \\
(3)
The H\"older inequality for Choquet integral is given by (I7).
\end{remark}

Finally, we note that for a function $f:\Omega\to [0,\infty ]$ 
\[
\int_0^\infty \Ha^{\delta}_\infty\big(\{x \in \Omega : f(x)^p>t\}\big) \, dt = \int_0^\infty p t^{p-1}\Ha^{\delta}_\infty\big(\{x \in \Omega : f(x)>t\}\big) \, dt
\]
by a change of variables.

\begin{remark}\label{lemma_a}
 There are finite constants $c_1(n)>0$ and $c_2(n)>0$ such that 
 \begin{equation*}
 c_1(n)\Ha^n_\infty (E) \le |E| \le c_2(n)\Ha^n_\infty(E)
 \end{equation*}
 for all Lebesgue measurable 
 sets $E$ in $\Rn$,  the Lebesgue measure of $E$ written as $|E|$.
 We refer to \cite[Proposition 2.5]{HH-S_JFA} and  \cite[Chapter 3, p.~14]{Adams2015} (or as well as to \cite[Chapter 2]{EvaG92}).
Hence, if  $\Omega \subset \Rn$ is Lebesgue measurable, then
 by  \cite[Proposition 2.3]{HH-S_La}
 there exist constants $c_1(n)>0$ and $c_2(n)>0$ such that
\begin{equation}\label{compare_int}
\frac{1}{c_1(n)}  \int_\Omega |f(x)|  \, d \Ha^{ n}_\infty \le \int_\Omega |f(x)| \, dx \le c_1(n)  \int_\Omega |f(x)|  \, d \Ha^{ n}_\infty
\end{equation}
for all Lebesgue measurable  functions $f:\Omega \to [-\infty, \infty]$.
These inequalities together with \eqref{comparable} yield that  the following three integrals
\[
\int_\Omega |f(x)|  \, d \Ha^{ n}_\infty, \int_\Omega |f(x)|  \, d \tilde\Ha^{ n}_\infty , \text { and } \int_\Omega |f(x)| \, dx
\]
are comparable  to each other  for Lebesgue measurable functions with  constants depending only on $n$.
\end{remark}

The following lemma can be found in \cite[Proposition 2.3]{ChenOoiSpector2023} and \cite[Proposition 2.3]{HH-S_La}.
\begin{proposition}\label{GeneralizationOV}
 If  $\Omega$ is a subset of $\Rn$ and  $0 < \delta_1 < \delta_2 \le n$, then for any function $f: \Omega \to [-\infty, \infty]$
 the  inequality
\begin{equation*}
\biggl(\int_\Omega |f(x)|\, d \Ha_{\infty}^{\delta_2} \biggr)^{1/{\delta_2}}
\le 
\biggl(
 \frac{\delta_2}{\delta_1}\biggr)^{1/\delta_2} \biggl(\int_\Omega |f(x)|^{\frac{\delta_1}{\delta_2}} \, d \Ha^{\delta_1}_\infty \biggr)^{1/\delta_1}
\label{lemma_b}
\end{equation*}
is valid.
\end{proposition}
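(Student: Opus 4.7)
The plan is to combine three ingredients: a pointwise comparison between $\Ha^{\delta_1}_\infty$ and $\Ha^{\delta_2}_\infty$ on subsets of $\Rn$, the layer-cake representation \eqref{IntegralDef} together with a change of variable, and a Hardy-type inequality for decreasing functions. I expect the pointwise comparison to be the main (but only mildly non-trivial) step.

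First I would establish the pointwise bound
$$\Ha^{\delta_2}_\infty(E) \le \Ha^{\delta_1}_\infty(E)^{\delta_2/\delta_1} \qquad \text{for every } E \subset \Rn.$$
This reduces to the elementary inequality $\sum_i r_i^{\delta_2} \le \bigl(\sum_i r_i^{\delta_1}\bigr)^{\delta_2/\delta_1}$ for any non-negative radii $r_i$. Setting $a_i := r_i^{\delta_2}$ and $q := \delta_1/\delta_2 \in (0,1]$, this is just $\bigl(\sum_i a_i\bigr)^q \le \sum_i a_i^q$, the standard subadditivity of $s \mapsto s^q$ on $[0,\infty)$. Taking the infimum over all ball coverings of $E$ yields the claim.

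Next, put $g := |f|^{\delta_1/\delta_2}$, so that $|f| = g^p$ with $p := \delta_2/\delta_1 \ge 1$, and write $E_u := \{x \in \Omega : g(x) > u\}$ and $\phi(u) := \Ha^{\delta_1}_\infty(E_u)$. By \eqref{IntegralDef} and the change of variable $t = u^p$ noted just before Remark~\ref{lemma_a},
$$\int_\Omega |f| \, d\Ha^{\delta_2}_\infty \;=\; p \int_0^\infty u^{p-1} \Ha^{\delta_2}_\infty(E_u) \, du \;\le\; p \int_0^\infty u^{p-1} \phi(u)^p \, du,$$
the last inequality being the pointwise bound from Step 1. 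Finally, since $\Ha^{\delta_1}_\infty$ is monotone by property (C2), $\phi$ is decreasing, so $u\phi(u) \le \int_0^\infty \phi(s) \, ds$ for every $u>0$; hence $u^{p-1}\phi(u)^{p-1} \le \bigl(\int_0^\infty \phi\bigr)^{p-1}$, and multiplying by $\phi(u)$ and integrating in $u$ gives
$$\int_0^\infty u^{p-1}\phi(u)^p \, du \;\le\; \Bigl(\int_0^\infty \phi(s)\, ds\Bigr)^p \;=\; \Bigl(\int_\Omega g \, d\Ha^{\delta_1}_\infty\Bigr)^p.$$
Combining these estimates and taking $\delta_2$-th roots produces the stated inequality with constant $(\delta_2/\delta_1)^{1/\delta_2}$; the only genuinely non-trivial point is the pointwise comparison of Hausdorff contents, and the rest is bookkeeping with distribution functions.
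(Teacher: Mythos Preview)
Your proof is correct. The paper does not prove this proposition itself but cites \cite[Proposition~2.3]{ChenOoiSpector2023} and \cite[Proposition~2.3]{HH-S_La}; the argument you give --- the pointwise content comparison $\Ha^{\delta_2}_\infty(E)\le \Ha^{\delta_1}_\infty(E)^{\delta_2/\delta_1}$ via subadditivity of $s\mapsto s^q$ for $q\in(0,1]$, followed by the layer-cake formula and the decreasing-function bound $u\phi(u)\le\int_0^\infty\phi$ --- is essentially the standard route and recovers the stated constant $(\delta_2/\delta_1)^{1/\delta_2}$ exactly.
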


Let us  look at  quasi-sublinearity  and sublinearity  of the Choquet integral with respect to the set function $H$,  that is
\begin{equation}\label{sublinearity}
\int_\Omega \sum_{i=1}^{\infty} f_i(x) d H \le K  \sum_{i=1}^{\infty} \int_\Omega  f_i(x) d H
\end{equation}
where $K\ge 1$ and $H=\Ha^{\delta}_\infty$ or $H=\tilde{\Ha}^{\delta}_\infty$.
Since property (I1) is valid  in cases  $H=\Ha^{\delta}_\infty$  and $H=\tilde{\Ha}^{\delta}_\infty$,
we may call property (\ref{sublinearity})
\emph{quasi-sublinearity}  of the Choquet integral whenever $K\neq 1$.
Property (\ref{sublinearity})  itself  is called \emph{sublinearity} if $K=1$.
For these names we refer to Remark \ref{I1_I6} (2).
The Choquet integral with respect to the dyadic Hausdorff content
$\tilde{\Ha}_\infty^{\delta}$ is sublinear \eqref{sublin} and   with respect to 
the classical Hausdorff content
${\Ha}_\infty^{\delta}$ it
 is quasi-sublinear  Lemma \ref{DennebergThm}.
 We point out that (C4) in Section \ref{sec:Hausdorff} of a set function itself is called a subadditivity property.

There are several result, starting from  Choquet \cite{Cho53},  Tops{\o}e \cite{Top74}, and  Anger \cite{Ang77}, where the  sublinearity
 of Choquet integral has been proved under various conditions 
 given to  functions.  However,  all these results seem to assume that the set function 
 itself is  strongly subadditive \eqref{st}. This is also known to be a necessary condition for the sublinearity of the Choquet integral
 in \cite[Chapter 6]{Den94}. 
We refer here  to a result of Denneberg \cite[Theorem 6.3, p.~75]{Den94}. Denneberg studies Choquet integrals for functions which could have  both negative
 and positive values, and thus he uses a general definition  for functions when defining Choquet integral. We study only non-negative functions and thus the definition of Denneberg coincides with our definition
for non-negative functions. Denneberg also supposes that the functions in the definition of Choquet integral 
 have so called
$\mu$-essentially $>-\infty$ property \cite[pp. 46, 74]{Den94}.
This condition  is satisfied  by non-negative functions. 
The  dyadic Hausdorff content  $\tilde{\Ha}_\infty^{\delta}$ is  a monotone, 
strongly subadditive set function.
Thus we  could apply  \cite[Theorem 6.3, p.~75]{Den94} to this Hausdorff content and obtain the following result   
for  any  $0<\delta \le n$,  
\begin{equation}\label{sublin}
\int_\Omega \sum_{i=1}^{\infty} f_i(x) d \tilde{\Ha}_\infty^{\delta} \le  \sum_{i=1}^{\infty} \int_\Omega  f_i(x) d \tilde{\Ha}_\infty^{\delta}
\end{equation}
for all non-negative functions $f_i:\Omega \to [0, \infty]$.
By inequality \eqref{comparable}  
the dyadic Hausdorff content $\tilde{\Ha}_\infty^{\delta}$ is comparable to ${\Ha}_\infty^{\delta}$, and hence
we obtain the following theorem which is crucial to  some of our results. 
Inequality \eqref{DennebergQuasiSublin} means that Choquet integral is quasi-sublinear.

\begin{theorem}\label{DennebergThm}
\cite[Theorem 6.3]{Den94}.
If $\Omega$ is a subset of $\Rn$ and  $\delta \in (0, n]$, then
for 
all sequences $(f_i)$ of non-negative functions $f_i:\Omega \to [0, \infty]$ 
\begin{equation}\label{DennebergQuasiSublin}
\int_\Omega \sum_{i=1}^{\infty} f_i(x) d \Ha^{\delta}_\infty \le c(n) \sum_{i=1}^{\infty} \int_\Omega  f_i(x) d \Ha^{\delta}_\infty\,,
\end{equation}
where the constant $c(n)$ depends only on $n$.
\end{theorem}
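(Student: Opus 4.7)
The plan is to bootstrap the desired quasi-sublinearity from the genuine sublinearity \eqref{sublin} already available for the dyadic content $\tilde\Ha^\delta_\infty$. Recall from Remark \ref{PropertiesHausdorff} that $\tilde\Ha^\delta_\infty$ is monotone and strongly subadditive, so Denneberg's theorem \cite[Theorem 6.3]{Den94} applies directly and yields, for every sequence of non-negative $f_i : \Omega \to [0,\infty]$,
\begin{equation*}
\int_\Omega \sum_{i=1}^\infty f_i(x)\, d\tilde\Ha^\delta_\infty \le \sum_{i=1}^\infty \int_\Omega f_i(x)\, d\tilde\Ha^\delta_\infty.
\end{equation*}
The task is thus to transport this inequality across the equivalence \eqref{comparable} between $\Ha^\delta_\infty$ and $\tilde\Ha^\delta_\infty$.

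First I would observe that the comparability \eqref{comparable} lifts immediately to the Choquet integrals. Indeed, for any non-negative function $g : \Omega \to [0,\infty]$ and any level $t > 0$, the superlevel set $\{x \in \Omega : g(x) > t\}$ is the same set regardless of which content we measure it with, so applying \eqref{comparable} pointwise in $t$ and integrating the distribution function over $(0,\infty)$ gives
\begin{equation*}
c_1(n) \int_\Omega g(x)\, d\Ha^\delta_\infty \le \int_\Omega g(x)\, d\tilde\Ha^\delta_\infty \le c_2(n) \int_\Omega g(x)\, d\Ha^\delta_\infty.
\end{equation*}

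Next I would chain these comparisons with \eqref{sublin}. Applying the left comparison to $g = \sum_i f_i$, the sublinearity of the Choquet integral with respect to $\tilde\Ha^\delta_\infty$, and finally the right comparison to each $g = f_i$ yields
\begin{equation*}
\int_\Omega \sum_{i=1}^\infty f_i(x)\, d\Ha^\delta_\infty \le \frac{1}{c_1(n)} \int_\Omega \sum_{i=1}^\infty f_i(x)\, d\tilde\Ha^\delta_\infty \le \frac{1}{c_1(n)} \sum_{i=1}^\infty \int_\Omega f_i(x)\, d\tilde\Ha^\delta_\infty \le \frac{c_2(n)}{c_1(n)} \sum_{i=1}^\infty \int_\Omega f_i(x)\, d\Ha^\delta_\infty.
\end{equation*}
Setting $c(n) := c_2(n)/c_1(n)$ gives \eqref{DennebergQuasiSublin}.

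Almost nothing here is delicate: the transfer of the set-function comparison \eqref{comparable} to integrals uses only the layer-cake definition \eqref{IntegralDef} and monotonicity in $t$, while the deep step is outsourced to Denneberg. The one point worth checking carefully is that the partial sums $S_N = \sum_{i=1}^N f_i$ converge pointwise and monotonically to $\sum_i f_i$, so that the superlevel sets $\{S_N > t\}$ increase to $\{\sum_i f_i > t\}$; since we only need an inequality and $\Ha^\delta_\infty$ is monotone, no deeper continuity from below is required. Therefore, the only genuine ``main obstacle'' is conceptual rather than technical: it is to recognize that strong subadditivity holds for $\tilde\Ha^\delta_\infty$ but fails for $\Ha^\delta_\infty$, which is precisely why a multiplicative constant $c(n)>1$ is unavoidable in \eqref{DennebergQuasiSublin}.
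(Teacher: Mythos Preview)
Your proposal is correct and matches the paper's own argument essentially line for line: the paper also first invokes Denneberg's theorem for the strongly subadditive dyadic content $\tilde\Ha^\delta_\infty$ to obtain \eqref{sublin}, and then transfers the inequality to $\Ha^\delta_\infty$ via the comparability \eqref{comparable}, exactly as you do. If anything, you have spelled out the passage from set-function comparability to integral comparability more explicitly than the paper does.
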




Next we introduce a quasi-normed space  $\L^p(\Omega, \Ha^{\delta}_\infty)$ when functions are not  necessarily Lebesgue measurable. 
 The letter ''$\textsc{n}$'' comes from  the word non-measurable.

\begin{remark}\label{LebesgueSpace}
(1) Letting $H$ be a monotone, subadditive set function
A. Ponce and D. Spector 
\cite{PonceSpector2023} 
introduce an equivalent relation $\sim$ among elements in the vector space of real-valued quasicontinuous functions
\cite[Section 6]{PonceSpector2023} in $\Rn$ by denoting $f\sim g$ whenever 
$f=g$ $H$-quasi-everywhere, that is, there exists  $E\subset\Rn$ such that $H(E)=0$ and $f=g$ in $\Rn\backslash E$. 
By \cite[Definition 3.1]{PonceSpector2023} a function $f: \R^n \to \R$ is $H$-quasicontinuous if for every $\ve>0$ there exists an open set $U\subset \Rn$ such that $H(U)\le \ve$ and $f|_{\Rn\setminus U}$ is finite and continuous in $\Rn \setminus U$.
They write that $[f]$ is the equivalence class that contains $f$ and define
\[
{L}^1(H):= \Big\{[f]: f: \R^n \to \R \mbox{ is $H$-quasicontinuous and}   \int |f|  d H < \infty \Big\}\,.
\]
If $H$ satisfies also countable subadditivity, then $L^1(H)$ is a Banach space \cite[Proposition 6.1]{PonceSpector2023}.
Assuming  more conditions for $H$  a completion result and  convergence results are proved \cite[Corollaries 6.2, 6.3, and 6.4]{PonceSpector2023}. \\
(2) If $H(E) =0$ implies $|E|=0$, then representatives of the equivalent classes in 
the space $L^1(H)$ are measurable functions by a  modification of Lusin's theorem
\cite[Theorem 4.20]{WheZ77}.
We recall that  for the Hausdorff content $\Ha^{\delta}_\infty (E)=0$  implies that $\Ha^{\delta} (E)=0$ and
also $\vert E\vert =0$ by \cite[Chapter 3, p.~14]{Adams2015}  as well as  by  \cite[Chapter 2]{EvaG92}. Thus $\Ha^\delta_\infty$-quasicontinuous functions are measurable.
\end{remark}

We consider function spaces which are formed by Choquet integrals with respect to Hausdorff content  without  assuming  quasicontinuity. 
But we are not able to obtain convergence results.
Let   $\Omega$ be a subset of $\Rn$, $n\geq 1$, $0<\delta\le n$, and $1\le p < \infty$. We write
\[
\textsc{n}\mathcal{L}^p(\Omega, \Ha^{\delta}_\infty):= \Big\{f: \Omega \to [-\infty, \infty] :   \int_\Omega |f|^p  d \Ha^{\delta}_\infty < \infty \Big\}.
\]
Properties (I1), (I5), and (I6) imply that  $\textsc{n}\mathcal{L}^p(\Omega, \Ha^{\delta}_\infty)$
is an $\R$-vector space. We define also an equivalence relation  for functions $f,g\in  
\textsc{n}\mathcal{L}^p(\Omega, \Ha^{\delta}_\infty)$
such that $f\sim g$ whenever
\[
\int_\Omega |f-g|^p  d \Ha^{\delta}_\infty=0.
\]
This means that  $f \sim g$ if and only if equality $f=g$  is valid $\Ha^{\delta}_\infty$-almost everywhere. 
We identify functions whenever they agree  
$\Ha^{\delta}_\infty$-almost everywhere and write
\begin{equation}\label{LpSpaces}
\L^p(\Omega, \Ha^{\delta}_\infty) := \textsc{n}\mathcal{L}^p(\Omega, \Ha^{\delta}_\infty)/ \sim.
\end{equation}

Recall that  a function $\|\cdot\|: X \to [0, +\infty]$ is called a \emph{quasi-norm} in an $\R$-vector space $X$ if for all $f, g \in X$ and $a \in \R$  the following three conditions are satisfied:
\begin{enumerate}
\item[(N1)] $\|f\|=0$ if and only if $f=0$;
\item[(N2)]  $\|a f\|= |a| \, \|f\|$;
\item[(N3)]  there exists $c\ge1$ such that $\|f+ g\| \le c \|f\| + c \|g\|$.
\end{enumerate}
If  $c=1$ in (N3), then $\|\cdot\|$ is a \emph{norm}.

Choquet integral gives a quasi-norm. The constant $2$ in the left-hand side of  equalities (I6) ja (I7) means that a norm is not possible  in this context.

\begin{proposition}\label{thm:quasinorm}
If $\Omega$ is a subset of  $\Rn$,  $n\geq 1$, $0<\delta\le n$, and $1\le p < \infty$,  then
\[
\|f\|_{\L^p(\Omega, \Ha^{\delta}_\infty)}:= \Big( \int_\Omega|f|^p d \Ha^{\delta}_\infty  \Big )^{\frac 1p}
\]
is a quasi-norm  in $\L^p(\Omega, \Ha^{\delta}_\infty)$. 
\end{proposition}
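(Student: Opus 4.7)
The plan is to verify the three quasi-norm axioms (N1), (N2), (N3) in order, using the list of Choquet-integral properties collected in Lemma~\ref{lem:Integral-basic-properties}. Two of them are almost immediate; the nontrivial work sits in (N3), where the lack of a genuine triangle inequality for Choquet integrals forces the appearance of the constant $c = 2$ and explains why we cannot hope for a norm.

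For (N1), note that $\|f\|_{\L^p(\Omega,\Ha^{\delta}_\infty)} = 0$ means $\int_\Omega |f|^p \, d\Ha^{\delta}_\infty = 0$, which by property (I2) is equivalent to $|f|^p = 0$, and hence $f = 0$, $\Ha^{\delta}_\infty$-almost everywhere. By the definition of the quotient in \eqref{LpSpaces}, this is exactly the statement that $f$ represents the zero class in $\L^p(\Omega,\Ha^{\delta}_\infty)$. For (N2), if $a = 0$ the identity is trivial; otherwise $|af|^p = |a|^p |f|^p$ pointwise, and property (I1) (applied with the non-negative scalar $|a|^p$) yields $\int_\Omega |af|^p \, d\Ha^{\delta}_\infty = |a|^p \int_\Omega |f|^p \, d\Ha^{\delta}_\infty$; taking $p$-th roots gives $\|af\| = |a|\, \|f\|$.

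The main obstacle is the quasi-triangle inequality (N3), because the Choquet integral is only quasi-subadditive. The idea is to combine the elementary pointwise estimate $|f+g|^p \le (|f|+|g|)^p \le 2^{p-1}(|f|^p + |g|^p)$, valid for $p \ge 1$ by convexity of $t \mapsto t^p$, with the monotonicity (I5), the scaling (I1), and the quasi-subadditivity (I6). Applying these in order gives
\begin{equation*}
\int_\Omega |f+g|^p \, d\Ha^{\delta}_\infty \;\le\; 2^{p-1} \int_\Omega \bigl(|f|^p + |g|^p\bigr)\, d\Ha^{\delta}_\infty \;\le\; 2^{p}\Bigl(\|f\|_{\L^p(\Omega,\Ha^{\delta}_\infty)}^{p} + \|g\|_{\L^p(\Omega,\Ha^{\delta}_\infty)}^{p}\Bigr).
\end{equation*}
Taking $p$-th roots and using the elementary inequality $(a^p + b^p)^{1/p} \le a + b$ for $a,b \ge 0$, $p \ge 1$, produces
\begin{equation*}
\|f+g\|_{\L^p(\Omega,\Ha^{\delta}_\infty)} \;\le\; 2\,\|f\|_{\L^p(\Omega,\Ha^{\delta}_\infty)} + 2\,\|g\|_{\L^p(\Omega,\Ha^{\delta}_\infty)},
\end{equation*}
so (N3) holds with $c = 2$, independently of $\delta$ and $p$. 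Finally, one should remark that (N3) combined with (I1) shows $\L^p(\Omega,\Ha^{\delta}_\infty)$ is actually closed under sums, justifying that $\|\cdot\|_{\L^p(\Omega,\Ha^{\delta}_\infty)}$ is defined on a genuine vector space, and that the constant $2$ is a direct reflection of the factor $2$ in (I6) — this is why a norm (the case $c=1$) cannot be expected.
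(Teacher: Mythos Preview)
Your proof is correct, but your route to (N3) differs from the paper's. You use the pointwise convexity bound $|f+g|^p \le 2^{p-1}(|f|^p+|g|^p)$ together with (I1), (I5), and (I6), obtaining the quasi-triangle inequality with constant $c=2$. The paper instead runs the classical Minkowski argument: write $|f+g|^p \le |f+g|^{p-1}(|f|+|g|)$, apply (I6) to split the sum, and then invoke H\"older's inequality (I7) to peel off $\big(\int_\Omega |f+g|^p\big)^{1/p'}$ and divide through, arriving at $c=4$. Your argument is more elementary (it avoids (I7) entirely) and yields the sharper constant; the paper's approach is closer in spirit to the usual proof of Minkowski's inequality and makes explicit that the obstruction to $c=1$ is the factor $2$ appearing in both (I6) and (I7).
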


\begin{proof}
Clearly $\|0\|_{\L^p(\Omega, \Ha^{\delta}_\infty)}=0$.
If
$\|f\|_{\L^p(\Omega, \Ha^{\delta}_\infty)}=0$,
then by the definition $\Ha^{\delta}_\infty(\{x \in \Omega : |f(x)|^p >t\})=0$ for every $t>0$. Thus for $\Ha^{\delta}_\infty$-almost everywhere
$f=0$ and  condition (N1) holds.
By property  (I1) condition (N2) is clear.

Let us prove (N3) and assume for that $f, g \in \L^p(\Omega, \Ha^{\delta}_\infty)$.
The case $p=1$ follows form (I6). Let $1<p<\infty$.
We may assume that 
$\|f+ g\|_{\L^p(\Omega, \Ha^{\delta}_\infty)}>0$. Using the Euclidean triangle inequality pointwise implies that
 $|(f+g)(x)|^p \le |(f+g)(x)|^{p-1}(|f(x)| + |g|(x))$.
Hence, properties 
(I5), (I6), and Hölder's inequality (I7) imply that
\[
\begin{split}
\int_\Omega |f+g|^p \,d \Ha^{\delta}_\infty
& \le 2 \int_\Omega |f+g|^{p-1} |f|  \,d \Ha^{\delta}_\infty + 2\int_U |f+g|^{p-1} |g|  \,d  \Ha^{\delta}_\infty\\
& \le 4 \Big(\int_\Omega |f+g|^{(p-1)p'} \,d \Ha^{\delta}_\infty\Big)^{\frac1{p'}} \Big(\int_U |f|^p  \,d \Ha^{\delta}_\infty \Big)^{\frac1p}  \\
& \quad + 4 \Big(\int_\Omega |f+g|^{(p-1)p'} \,d \Ha^{\delta}_\infty\Big)^{\frac1{p'}} \Big(\int_U |g|^p  \,d \Ha^{\delta}_\infty \Big)^{\frac1p}.
\end{split}
\]
Since $\Big(\int_\Omega |f+g|^{(p-1)p'} \,d \Ha^{\delta}_\infty\Big)^{\frac1{p'}} = \Big(\int_\Omega |f+g|^{p}\,d  \Ha^{\delta}_\infty\Big)^{\frac1{p'}}>0$ the claim follows.
\end{proof}

\section{On Hausdorff content maximal operator}\label{Section_Maximal}

The  Hausdorff content centred maximal function was defined  by  Y.-W. Chen, K. H. Oii, and D. Spector in \cite{ChenOoiSpector2023}.
We recall the definition for functions which are not necessarily Lebesgue measurable.

Let $f:\Rn \to [-\infty, \infty]$, $\delta \in(0, n]$, and $\kappa \in[0, \delta)$. 
The   Hausdorff content  centred fractional maximal function is defined as
\begin{equation}\label{new}
\M^\delta_\kappa f(x) := \sup_{r>0} \frac{r^\kappa}{\Ha_\infty^\delta (B(x, r))} \int_{B(x, r)} |f(y)| \, d \Ha_\infty^\delta(y).
\end{equation}
We write  $\M_0^\delta =: \M^\delta$.

We recall that for  Lebesgue  measurable functions $f:\Rn \to [-\infty, \infty]$ the classical centred fractional Hardy-Littlewood maximal operator is defined by
\begin{equation}\label{classical}
M_\kappa f(x) := \sup_{r>0} \frac{r^\kappa}{|B(x, r)|} \int_{B(x, r)} |f(y)| \, dy.
\end{equation}
By Remark \ref{lemma_a} there exist  finite constants $c_1=c_1(n)>0$ and $c_2=c_2(n)>0$ such that
\begin{equation}\label{compareMM^n}
c_1 \M^n_\kappa f(x) \le M_\kappa f(x) \le c_2 \M^n_\kappa f(x)
\end{equation}
for all  Lebesgue measurable $f: \Rn \to [-\infty, \infty]$, i.e.\ for measurable functions 
the  Hausdorff content  centred fractional maximal function
$\M^n_\kappa f$ 
is comparable with 
the classical centred fractional  Hardy--Littlewood maximal function $M_\kappa f$.
But we are  interested in functions which are not  necessarily Lebesgue measurable.

D.\ R.\ Adams  showed that
the classical Hardy-Littlewood maximal operator  \eqref{classical} satisfies a strong type inequality in the sense of Choquet with respect to the Hausdorff content
\cite[Theorem A]{Adams1986}, \cite[Theorem 7. (a)]{Adams1998}.
We refer also to \cite[Theorem (i)]{OV} by
J.\ Orobitg and J.\ Verdera. 
 Y.-W. B. Chen, K. H. Ooi, and D. Spector showed the boundedness for the   Hausdorff content centred maximal function  $\M^{\delta}f$
\cite[Theorem 1.2]{ChenOoiSpector2023} when  $\delta \in (0,n)$. The case 
for $\M^n f$ is  considered in \cite[Corollary 1.4]{ChenOoiSpector2023}.  We will complement this last result.

\begin{remark} The following 
 Proposition~\ref{prop:lsc} shows 
 that the maximal function $\M^\delta_\kappa f$ is Lebesgue measurable, although the function $f$ itself might be non-measurable.
\end{remark}

Properties  (I1) and (I7)  imply that the fractional maximal operator $\M^\delta_\kappa$ is quasi-sublinear. Recall the definition from Remark \ref{I1_I6}. We show that the fractional maximal function 
$\M^\delta_\kappa f$
 is lower semicontinuous.

\begin{proposition}\label{prop:lsc}
If $n \ge 1$, $\delta \in (0, n]$, $\kappa \in [0, \delta)$,  and $f: \Rn \to [-\infty, \infty]$  any function,
then the function  $\M^\delta_\kappa f$ is lower semicontinuous.
\end{proposition}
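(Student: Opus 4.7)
The plan is to establish the standard lower-semicontinuity criterion: for every $\lambda\in\R$, the super-level set $\{x\in\Rn:\M^\delta_\kappa f(x)>\lambda\}$ is open. Fix such a $\lambda$ and a point $x$ in this set; the case $\lambda<0$ is trivial since the maximal function is non-negative, so assume $\lambda\ge 0$. Using Proposition~\ref{lem:Antti} to rewrite $\Ha_\infty^\delta(B(x,r))=r^\delta$, the supremum defining $\M^\delta_\kappa f(x)$ produces a radius $r>0$ with
\[
r^{\kappa-\delta}\int_{B(x,r)}|f(y)|\,d\Ha_\infty^\delta(y) \;>\; \lambda.
\]

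Next I would exploit the geometric inclusion $B(x,r)\subset B(z,r+\ve)$ whenever $|z-x|<\ve$. The monotonicity of the Choquet integral with respect to the domain of integration, property (I4) of Lemma~\ref{lem:Integral-basic-properties}, then yields
\[
\int_{B(z,r+\ve)}|f(y)|\,d\Ha_\infty^\delta(y) \;\ge\; \int_{B(x,r)}|f(y)|\,d\Ha_\infty^\delta(y).
\]
Substituting the candidate radius $r+\ve$ into the supremum defining $\M^\delta_\kappa f(z)$ gives
\[
\M^\delta_\kappa f(z) \;\ge\; (r+\ve)^{\kappa-\delta}\int_{B(x,r)}|f(y)|\,d\Ha_\infty^\delta(y).
\]

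Because $\kappa<\delta$, the map $\ve\mapsto (r+\ve)^{\kappa-\delta}$ is continuous at $\ve=0$, and the strict inequality obtained at the first step is preserved for all sufficiently small $\ve>0$. Hence there exists $\ve_0>0$ such that $\M^\delta_\kappa f(z)>\lambda$ for every $z$ in the Euclidean ball of radius $\ve_0$ around $x$, proving that the super-level set is open.

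The only minor wrinkle is the case when $\int_{B(x,r)}|f|\,d\Ha_\infty^\delta=+\infty$, but then $\M^\delta_\kappa f(z)=+\infty$ for every $z$ with $|z-x|<\ve$ by the same inclusion argument, so the conclusion is immediate. Since lower semicontinuity imposes no measurability requirement on $f$ at all (the only ingredients are Proposition~\ref{lem:Antti} and the monotonicity (I4), both of which hold for arbitrary $f$), the non-measurable setting creates no additional difficulty; I expect no serious obstacle in the argument.
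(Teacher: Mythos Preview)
Your proof is correct and follows essentially the same route as the paper: pick a radius realizing the strict inequality, use the inclusion $B(x,r)\subset B(z,r+\ve)$ together with monotonicity (I4), and invoke Proposition~\ref{lem:Antti} to see that the prefactor depends continuously on $\ve$. The only cosmetic difference is that you substitute $\Ha_\infty^\delta(B(x,r))=r^\delta$ at the outset, whereas the paper keeps the Hausdorff-content notation and checks $(R+\ve)^\delta-R^\delta\to 0$ separately; the arguments are otherwise identical.
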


\begin{proof}
We have to show that $\{x \in \Rn : \M^\delta_\kappa f(x) >t\}$ is an open set for every $t\in \R$.  Since $\M^\delta_\kappa f$ is non-negative, we may assume that $t\ge 0$. 
Let us pick a point  $x_0 \in \{x \in \Rn : \M^\delta_\kappa f(x) >t\}$ and choose 
$R>0$ so that
\[
\frac{R^\kappa}{\Ha_\infty^\delta (B(x_0, R))} \int_{B(x_0, R)} |f(x)| \, d \Ha_\infty^\delta(x)>t.
\]
Let us then pick up   $\ve >0$ and choose a point  $y \in B(x_0, \ve)$. 
Then  $  B(x_0, R)\subset B(y, R +\ve)$, and hence  the monotonicity of the Choquet integral  (I4) implies that
\[
\int_{B(x_0, R)} |f(x)| \, d \Ha_\infty^\delta \le \int_{B(y, R+\ve)} |f(x)| \, d \Ha_\infty^\delta.
\]
By the monotonicity of the Hausdorff content  (C2), Remark \ref{PropertiesHausdorff}, and Proposition~\ref{lem:Antti} we obtain
\[
0\le \Ha_\infty^\delta (B(y, R+\ve))- \Ha_\infty^\delta (B(x, R)) = (R+\ve)^\delta - R^\delta \to 0
\]
as $\ve \to 0^+$.
Thus we may choose $\ve>0$ to be so small that 
\[
\frac{(R+\ve)^\kappa}{\Ha_\infty^\delta (B(y, R+\ve))} \int_{B(y, R+\ve)} |f(x)| \, d \Ha_\infty^\delta(x) >t
\]
for all $y \in B(x_0, \ve)$. 
This yields that $B(x_0, \ve) \subset \{x \in \Rn : \M^\delta_\kappa f(x) >t\}$ i.e. $\{x \in \Rn : \M^\delta_\kappa f(x) >t\}$ is an open set. 
\end{proof}

It was shown in \cite[Theorem 1.2 and Corollary 1.4]{ChenOoiSpector2023} that the  Hausdorff content centred maximal function
 \eqref{new} is bounded.   
 We complement the boundedness result for $\M^nf$.
This  result extends \cite[Theorem A]{Adams1986}, 
\cite[Theorem (i) ]{OV},   and \cite[Corollary 1.4(i)]{ChenOoiSpector2023}  to   cover  also  other than Lebesgue measurable functions.

\begin{theorem}\label{thm:M-bounded}
Let  $n \ge 1$, $\delta\in(0, n]$ and $p \in (\delta/n, \infty)$.
Then there exists a constant $c$ depending only on $n$, $\delta$, and $p$ such that
\begin{equation}\label{strongineq}
\int_\Rn (\M^n f(x))^p \, d \Ha^{\delta}_\infty \le c \int_\Rn |f(x)|^p \, d \Ha^{\delta}_\infty
\end{equation}
for all functions  $f:\Rn \to [-\infty, \infty]$.
\end{theorem}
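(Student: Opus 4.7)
The plan is to reduce the case of an arbitrary function to the Lebesgue measurable case handled by D.\ R.\ Adams \cite[Theorem A]{Adams1986}. Given $f:\Rn \to [-\infty, \infty]$, the key step is to construct a Borel measurable pointwise majorant $g:\Rn \to [0, \infty]$ of $|f|$ whose super-level sets are controlled in $\Ha^\delta_\infty$ by those of $|f|$. Once $g$ is available, the monotonicity property (I5) of the Choquet integral in the integrand yields $\M^n f(x) \le \M^n g(x)$ pointwise, and the measurability of $g$ together with \eqref{compareMM^n} gives $\M^n g(x) \le c(n) M g(x)$, where $M$ denotes the classical Hardy--Littlewood maximal operator.

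The construction of $g$ is dyadic and rests on the outer regularity property (C3). For each $k \in \Z$, put $E_k := \{|f|>2^k\}$; intersecting a sequence of open supersets of $E_k$ whose $\Ha^\delta_\infty$-contents decrease to $\Ha^\delta_\infty(E_k)$ produces a Borel ($G_\delta$) set $V_k \supset E_k$ with $\Ha^\delta_\infty(V_k) = \Ha^\delta_\infty(E_k)$. Next, replace $V_k$ by the nested family $W_k := \bigcap_{j\le k} V_j$, which is still Borel, still contains $E_k$, satisfies $\Ha^\delta_\infty(W_k) \le \Ha^\delta_\infty(E_k)$, and is decreasing in $k$. Define
\[
g(x) := 2\sup\{2^k : x \in W_k\}, \qquad \sup\emptyset := 0.
\]
If $2^k < |f(x)| \le 2^{k+1}$, then $x \in E_k \subset W_k$, so $g(x) \ge 2^{k+1} \ge |f(x)|$. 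For $t>0$ with $2^{k_0} \le t < 2^{k_0+1}$, the nesting forces $\{g>t\} = W_{k_0}$, giving
\[
\Ha^\delta_\infty(\{g>t\}) \le \Ha^\delta_\infty(E_{k_0}) \le \Ha^\delta_\infty(\{|f|>t/2\}),
\]
where the last inequality uses $2^{k_0} > t/2$.

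By the layer-cake representation and the substitution $s=t/2$, this distribution bound yields $\int_\Rn g^p \, d\Ha^\delta_\infty \le 2^p \int_\Rn |f|^p \, d\Ha^\delta_\infty$. Putting the estimates together, property (I5) and the pointwise bounds above give
\[
\int_\Rn (\M^n f)^p \, d\Ha^\delta_\infty \le \int_\Rn (\M^n g)^p \, d\Ha^\delta_\infty \le c \int_\Rn (Mg)^p \, d\Ha^\delta_\infty,
\]
and the classical strong-type inequality of Adams for the Hardy--Littlewood maximal operator on $L^p(\Ha^\delta_\infty)$, valid for Lebesgue measurable $g$ and $p>\delta/n$, bounds the last integral by $c \int_\Rn g^p \, d\Ha^\delta_\infty \le c \int_\Rn |f|^p \, d\Ha^\delta_\infty$.

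The only delicate point is the simultaneous realization of the two properties of $g$: a pointwise domination of $|f|$ and a distribution control in $\Ha^\delta_\infty$ for an arbitrary, potentially very irregular function $f$. The dyadic nesting of the Borel envelopes $W_k$ handles both at once, at the cost of a harmless factor of $2$; the rest of the argument is a standard reduction via \eqref{compareMM^n} to the classical Adams bound.
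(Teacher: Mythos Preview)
Your argument is correct and takes a genuinely different route from the paper's.

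The paper also builds a measurable majorant, but it does so by covering the level sets $\{2^k<|f|\le 2^{k+1}\}$ with balls $B_j^k$ whose radii nearly realise the Hausdorff content, setting $h=\sum_k 2^{k+1}\chi_{A_k}$ (or $g=\sum_k 2^{p(k+1)}\chi_{A_k}$), and then expanding $Mh$ via sublinearity into $\sum_{k,j} M\chi_{B_j^k}$. The core of the paper's proof is the Orobitg--Verdera lemma $\int_{\Rn} (M\chi_{B})^p\,d\Ha^\delta_\infty \le c\,r^\delta$, computed explicitly; the cases $p\ge 1$, $\delta/n<p<1$, and $\delta=n$ are handled separately. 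In effect the paper re-derives the measurable-case inequality inside the proof.

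Your construction is more conceptual: you use only outer regularity (C3) to produce Borel envelopes $V_k\supset E_k$ with equal content, nest them to $W_k$, and read off a Borel majorant $g$ satisfying $|f|\le g$ and $\Ha^\delta_\infty(\{g>t\})\le \Ha^\delta_\infty(\{|f|>t/2\})$. This cleanly separates the ``non-measurable $\to$ measurable'' reduction from the harmonic analysis, which you then outsource to the known strong-type bound for $M$ on $L^p(\Ha^\delta_\infty)$ for measurable inputs (Adams \cite{Adams1986,Adams1998}; see also \cite{OV}). The gain is brevity and a uniform treatment of all $p>\delta/n$ and all $\delta\in(0,n]$ without case splits; the cost is that the proof is less self-contained, since the Adams/Orobitg--Verdera inequality is used as a black box rather than reproved. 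Both approaches hinge on the same principle---replace $f$ by a measurable majorant with comparable $\Ha^\delta_\infty$-distribution---but your envelope construction via (C3) is a cleaner way to obtain such a majorant than the explicit ball coverings used in the paper.
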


\begin{remark}
(1) Functions in Theorem~\ref{thm:M-bounded} are not necessarily Lebesgue measurable, but functions $\M^n f$ are Lebesgue measurable
by Proposition~\ref{prop:lsc}.\\
(2) From  estimate \eqref{lemma-b} in the proof of this theorem  can be seen  that the constant  $c$ in \eqref{strongineq} blows up as $p \to \delta/n$.
\end{remark}

\begin{proof}[Proof of Theorem \ref{thm:M-bounded}]
We may assume that the right-hand side of \eqref{strongineq}  is finite and $f \ge 0$. Then $0 \le f(x)<\infty$ for $\Ha^{\delta}_\infty$-almost every $x \in \Rn$.

Assume first that $\delta\in(0, n)$.
We follow the proof of \cite[Theorem]{OV}.  
Let us take any function  $f:\Rn \to [0, \infty]$ and fix $k \in \Z$. We study   the set $\{x \in \Rn : 2^{k} < f(x) \le 2^{k+1}\}$. 
If $\Ha_\infty^{\delta} \big(\{x \in \Rn : 2^{k} < f(x) \le 2^{k+1}\} \big) >0$, 
 by
the definition of $\Ha_\infty^\delta$  we  find
balls  $B_j^k:= B(x_j, r_j)$, $j=1,2, \ldots, $ such that
\[
\{x \in \Rn : 2^{k} < f(x) \le 2^{k+1}\} \subset \bigcup_{j=1}^\infty B_{j}^k 
\]
and 
\[ 
\sum_{j=1}^\infty r_{j}^\delta \le 2  \Ha_\infty^{\delta} \big(\{x \in \Rn : 2^{k} < f(x) \le 2^{k+1}\} \big).
\]
 If $\Ha_\infty^{\delta} \big(\{x \in \Rn : 2^{k} < f(x) \le 2^{k+1}\} \big)=0$, no ball is chosen.
Now we consider the case $p \geq 1$ first.
We set $A_k := \bigcup_j B_{j}^k$ and $g := \sum_k 2^{p(k+1)} \chi_{A_k}$, and note that $A_k$ is  a  Lebesgue measurable set  and $g$ 
 is a  Lebesgue measurable function.
Moreover,   the inequality $f(x)^p \le g(x)$  holds pointwise
$\Ha^{\delta}_\infty$-almost everywhere, since  the set $\{x: f(x)=\infty\}$ is not covered by sets $A_k$. 
In the case $p>1$ by  the H\"older inequality for Choquet integrals, that is the aforementioned property (I7), we have
\[
\begin{split}
&\frac{1}{\Ha_\infty^{n} (B(x, r))} \int_{B(x, r)} |f(y)| \, d \Ha_\infty^{n}(y) \\
&\le  \frac{1}{\Ha_\infty^{n} (B(x, r))} 2 \Big( \int_{B(x, r)} |f(y)|^p \, d \Ha_\infty^n(y)\Big)^{\frac1p} \Big( \int_{B(x, r)} 1^{p'} \, d \Ha_\infty^n(y)\Big)^{\frac1{p'}} \\
&\le  \frac{1}{\Ha_\infty^n (B(x, r))^{1-\frac1{p'}}} 2 \Big( \int_{B(x, r)} |f(y)|^p \, d \Ha_\infty^n(y)\Big)^{\frac1p}\\
&=  2 \bigg( \frac{1}{\Ha_\infty^n (B(x, r))} \int_{B(x, r)} |f(y)|^p \, d \Ha_\infty^n(y)\bigg)^{\frac1p}\,.
\end{split}
\]
Whenever $p=1$, the above estimate is trivial.
Thus
\[
(\M^n f(x))^p \le  2^p \M^n (f^p)(x) \le 2^p \M^n g(x)
\]
 for $\Ha_\infty^\delta$-almost all $x$. The latter inequality  is true, since 
  $f^p(x) \le g$(x)  holds $\Ha^\delta_\infty$-almost everywhere.
Since $g$ is Lebesgue  measurable, 
we obtain by using 
the classical Hardy--Littlewood maximal operator 
\[
\M^n g(x) \le c(n) M g(x) \le  c(n) \sum_{k=-\infty}^\infty 2^{p(k+1)} \sum_{j=1}^\infty M  \chi_{B_{j}^k} (x).
\]
Monotonicity (I4) and 
quasi-linearity of  Choquet integral Theorem~\ref{DennebergThm} imply that
\begin{equation}\label{ensimm}
\begin{split}
\int_\Rn (\M^n f(x))^p \, d \Ha_\infty^{\delta}  &\le 2^p  \int_\Rn \M^n g(x) \, d \Ha_\infty^{\delta}\\
&\le  c(n, p)  \sum_{k=-\infty}^\infty 2^{p(k+1)} \sum_{j=1}^\infty \int_\Rn M 
\chi_{B_{j}^k} (x) \, d \Ha_\infty^{\delta}.
\end{split}
\end{equation}
Next we use \cite[Lemma 1]{OV}  to estimate the last integral
 in \eqref{ensimm}. Since  the proof 
of  \cite[Lemma 1]{OV} is short, we include it here.  
First we estimate
\[
M \chi_{B_{j}^k}(x) \le c(n) \inf \bigg\{1, \frac{r_{j}^n}{|x-x_{j}|^n} \bigg \}\,.
\]
Thus
\[
\begin{split}
\int_\Rn M \chi_{B_{j}^k}(x)\, d \Ha_\infty^{\delta} 
&\le 2\int_{2 B_{j}^k} c(n) \, d \Ha_\infty^{\delta}  + 2\int_{\Rn\setminus 2B_{j}^k}    \frac{c(n) r_{j}^{n}}{|x-x_{j}|^{n}} \, d \Ha_\infty^{\delta} \\ 
&\le c(n) r_{j}^{\delta} + c(n) \int_0^1   \Ha_\infty^{\delta} \big(\{x \in \Rn : r_{j}^{n} |x-x_{j}|^{-n}>t\}\big) \, dt.
\end{split}
\]
We note that $\{x \in \Rn : r_{j}^{n} |x-x_{j}|^{-n}>t\} = B(x_{j}, r)$, where
$r= r_{j} t^{-1/n}$.  Since $\delta<n$, we obtain
\begin{equation}\label{lemma}
\begin{split}
\int_\Rn M \chi_{B_{j}^k}(x) \, d \Ha_\infty^{\delta} (x) 
&\le c(n) r_{j}^{\delta} + c(n)  \int_0^1 r_{j}^{\delta}  t^{-\frac{\delta}{n}} dt
 \le \frac{c(n)}{n-\delta} r_{j}^{\delta}\,.
\end{split}
\end{equation}
Now  we combine  \eqref{ensimm} and \eqref{lemma}  to estimate
\[
\begin{split}
&\int_\Rn (\M^n f(x))^p \, d \Ha_\infty^{\delta} 
\le  c(n, \delta, p) \sum_{k=-\infty}^\infty 2^{p(k+1)} \sum_{j=1}^\infty r_{j}^\delta\\
&\qquad\le  c(n, \delta, p)  \sum_{k=-\infty}^\infty 2^{p(k+1)+1}  \Ha_\infty^{\delta} \big(\{x \in \Rn : 2^{k} < f(x) \le 2^{k+1}\} \big)\\
&\qquad\le  c(n, \delta, p)  \sum_{k=-\infty}^\infty  \frac{2^{2p}}{2^p -1}\int_{2^{p(k-1)}}^{2^{pk}}  \Ha_\infty^{\delta} \big(\{x \in \Rn : 2^{k} < f(x) \le 2^{k+1}\} \big) \, dt\\
&\qquad\le  c(n, \delta, p)  \sum_{k=-\infty}^\infty  \frac{2^{2p}}{2^p -1}\int_{2^{p(k-1)}}^{2^{pk}}  \Ha_\infty^{\delta} \big(\{x \in \Rn :  (f(x))^p > 2^{pk}\} \big) \, dt\\
&\qquad\le  c(n, \delta, p)  \sum_{k=-\infty}^\infty \int_{2^{p(k-1)}}^{2^{pk}}  \Ha_\infty^{\delta} \big(\{x \in \Rn :  (f(x))^p > t\} \big) \, dt.
\end{split}
\]
Next we use the fact that Lebesgue integral over essentially  disjoint sets can be calculate as an integral over an union of the sets and obtain
\[
\begin{split}
\int_\Rn (\M^n f(x))^p \, d \Ha_\infty^{\delta}  
&\le  c(n, \delta, p) \int_{0}^{\infty}  \Ha_\infty^{\delta} \big(\{x \in \Rn :  (f(x))^p > t\} \big) \, dt\\
&= c(n, \delta, p)\int_{0}^{\infty} (f(x))^p \, d \Ha_\infty^{\delta}.
\end{split}
\]
This yields the claim in the case $p \ge 1$.

Assume then that $\delta/n<p <1$.  Let us use sets $A_k$ from the previous case and write  $h := \sum_k 2^{k+1} \chi_{A_k}$. Since $h$ is Lebesgue  measurable we obtain by using the classical Hardy--Littlewood maximal operator 
\[
\M^n f(x) \le c(n) M h(x) \le  c(n) \sum_{k=-\infty}^\infty 2^{k+1} \sum_{j=1}^\infty M  \chi_{B_{j}^k} (x).
\]
Since $p<1$,  we have
\[
(\M^n f(x))^p \le c(n)  \sum_{k=-\infty}^\infty 2^{p(k+1)} \sum_{j=1}^\infty  (M  \chi_{B_{j}^k} (x))^p.
\]
Thus, Lemma~\ref{DennebergThm} yields that
\[
\begin{split}
\int_\Rn (\M^n f(x))^p\, d \Ha_\infty^{\delta}
&\le c(n) \sum_{k=-\infty}^\infty 2^{p(k+1)} \sum_{j=1}^\infty  \int_\Rn (M  \chi_{B_{j}^k} (x))^p \, d \Ha_\infty^{\delta}\,.
\end{split}
\]
We use the same estimate for the Hardy-Littlewood maximal function as in the previous case and obtain
\[
\begin{split}
&\int_\Rn (M \chi_{B_{j}^k}(x))^p\, d \Ha_\infty^{\delta} (x) 
\le 2\int_{2B_{j}^k} c(n) \, d \Ha_\infty^{\delta} (x)  + 2\int_{\Rn\setminus 2B_{j}^k}    \frac{c(n) r_{j}^{pn}}{|x-x_{j}|^{pn}} \, d \Ha_\infty^{\delta} (x)\\ 
&\quad\le c(n, \delta) r_{j}^{\delta} + c(n) \int_0^1   \Ha_\infty^{\delta} \big(\{x \in \Rn : r_{j}^{pn} |x-x_{j}|^{-pn}>t\}\big) \, dt.
\end{split}
\]
We note that $\{x \in \Rn : r_{j}^{pn} |x-x_{j}|^{-pn}>t\} = B(x_{j}, r)$, where
$r= r_{j} t^{-1/pn}$.  Since $\delta/n<p$, we have
\begin{equation}\label{lemma-b}
\begin{split}
\int_\Rn M \chi_{B_{j}^k}(x) \, d \Ha_\infty^{\delta} (x) 
&\le c(n) r_{j}^{\delta} + c(n)  \int_0^1 r_{j}^{\delta}  t^{-\frac{\delta}{pn}} dt\\
& \le \frac{c(n)}{pn-\delta} r_{j}^{\delta}\,.
\end{split}
\end{equation}
Hence,  as in the previous case 
\[
\begin{split}
\int_\Rn (\M^n f(x))^p \, d \Ha_\infty^{\delta} 
&\le  c(n, \delta)  \sum_{k=-\infty}^\infty 2^{p(k+1)} \sum_{j=1}^\infty r_{j}^\delta\\
&\le c(n, \delta, p)\int_{0}^{\infty} (f(x))^p \, d \Ha_\infty^{\delta}\,.
\end{split}
\]
Thus the claim follows also when $p \in (\delta/n, 1)$.

Assume then that $\delta=n$ and $p \in (1, \infty)$.
Let  the sets $A_k$ and  the function $h$  be as before, i.e.\  $A_k = \bigcup_j B_{j}^k$  and  $h = \sum_k 2^{k+1} \chi_{A_k}$. Then, the function  $h$ is Lebesgue  measurable, and
$f(x) \le h(x) \le 2 f(x)$ for all  points  $x \in \Rn \setminus\{x: f(x)=\infty\}$. Recall  that $\Ha^{n}_\infty(\{x: f(x) =\infty\})=0$, since we 
assumed the right-hand side of \eqref{strongineq}   to be finite.
Estimates \eqref{compare_int}, \eqref{compareMM^n},  and the boundedness of the classical Hardy-Littlewood maximal operator imply that 

 \begin{align*}
 \int_\Rn (\M^n f(x))^p \, d \Ha_\infty^{n}
&\le \int_\Rn (\M^n h(x))^p \, d \Ha_\infty^{\delta}
\le c(n) \int_\Rn (M h(x))^p \, dx\\
&\le \frac{c(n) p}{p-1} \int_\Rn h(x)^p \, dx
\le c(n, p) 2^p \int_\Rn f(x)^p \, dx\\
&\le c(n, p) \int_\Rn f(x)^p \, d \Ha_\infty^{n}.  \qedhere
 \end{align*}
\end{proof}

\begin{remark}
We emphasise that in Theorem~\ref{thm:M-bounded} only the  operator $\M^n$ is studied.
The operator  $\M^\delta$ with $\delta<n$ is  not included, since  in the proof  of 
Theorem~\ref{thm:M-bounded} the term
$(\M^n f(x))^p$ is estimated by 
$M g(x)$ which  works only for $\M^n$.  
\end{remark}

Next we study the  Hausdorff content centred fractional  maximal function. It seems that there are  no previous results for this 
Hausdorff content  fractional  maximal operator.

\begin{proposition}\label{pointwise}
If $n \ge 1$, $d \in (0, n]$, $\kappa \in (0, d)$, and $1<q< \frac{d}{\kappa}$, then for any function $f:\Rn \to [-\infty, \infty]$
\begin{equation}\label{proposition_pointwise}
\M^d_\kappa f(x) \le c \Big(\int_{\Rn} |f(y)|^{q} \, d \Ha^{d}_\infty (y) \Big)^{\frac{\kappa}{d}} 
\big( \M^d f(x)\big)^{1-\frac{q\kappa}{d}} \mbox{ for all } x\in \Rn,
\end{equation}
where $c$ is a constant which depends on $d$.
\end{proposition}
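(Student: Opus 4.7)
The plan is a standard interpolation argument in the spirit of Hedberg's trick, adapted to the Choquet setting. By Proposition~\ref{lem:Antti} we have $\Ha_\infty^d(B(x,r))=r^d$, so we can rewrite the definition of the fractional maximal operator as
\[
\M^d_\kappa f(x) = \sup_{r>0} r^{\kappa-d} \int_{B(x,r)} |f(y)|\,d\Ha^d_\infty(y).
\]
The idea is to bound the integral $I(r):=\int_{B(x,r)} |f|\,d\Ha^d_\infty$ in two different ways, one good for small $r$ and one good for large $r$, and then optimise over the parameter $r$.

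First I would observe the trivial bound $I(r)\le r^d\,\M^d f(x)$, which comes directly from the definition of $\M^d f$ (the term $r^d$ is exactly $\Ha^d_\infty(B(x,r))$). Second, applying the Hölder inequality for the Choquet integral (property (I7)) with exponents $q$ and $q'=q/(q-1)$, together with property (I3) and Proposition~\ref{lem:Antti} to evaluate $\int_{B(x,r)} 1\,d\Ha^d_\infty = r^d$, gives
\[
I(r) \le 2\Bigl(\int_{B(x,r)} |f|^q\,d\Ha^d_\infty\Bigr)^{1/q} r^{d/q'} \le 2\,N^{1/q}\,r^{d-d/q},
\]
where $N:=\int_{\Rn}|f(y)|^q\,d\Ha^d_\infty(y)$ and the second inequality uses monotonicity (I4).

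Multiplying both bounds by $r^{\kappa-d}$ gives the pair of pointwise estimates $r^{\kappa-d}I(r)\le r^\kappa\,\M^d f(x)$ and $r^{\kappa-d}I(r)\le 2 N^{1/q} r^{\kappa-d/q}$. The first is increasing in $r$ (since $\kappa>0$) and the second is decreasing in $r$ (since $q\kappa<d$ forces $\kappa-d/q<0$). Balancing the two at the crossover value
\[
r_0 := \Bigl(\frac{2N^{1/q}}{\M^d f(x)}\Bigr)^{q/d}
\]
yields, after a short computation, the common value $2^{q\kappa/d}\,N^{\kappa/d}\,(\M^d f(x))^{1-q\kappa/d}$. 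Since for every $r$ we have $r^{\kappa-d}I(r) \le \min\{r^\kappa \M^d f(x),\,2N^{1/q}r^{\kappa-d/q}\}$, taking the supremum over $r>0$ is bounded by this maximum-of-the-minimum, which is exactly the right-hand side of \eqref{proposition_pointwise}.

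I do not anticipate a serious obstacle here; the only places where non-measurability could cause trouble are (I7) and (I4), both of which are already stated for arbitrary non-negative functions, and the evaluation $\int_{B(x,r)} 1\,d\Ha^d_\infty = r^d$, which is available from (I3) and Proposition~\ref{lem:Antti}. The main point requiring care is verifying the strict inequality $\kappa - d/q < 0$ (which uses the hypothesis $q<d/\kappa$) so that the second bound genuinely decreases in $r$ and the balancing argument makes sense.
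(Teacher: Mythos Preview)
Your proof is correct and rests on the same two ingredients as the paper's: H\"older's inequality (I7) to produce the bound $I(r)\le 2N^{1/q}r^{d/q'}$, and the definition of $\M^d$ to produce $I(r)\le r^d\,\M^d f(x)$. The packaging differs slightly. You keep the two bounds separate and balance them at the crossover radius $r_0$; the paper instead writes $I(r)=I(r)^{q\kappa/d}\,I(r)^{1-q\kappa/d}$, applies H\"older only to the first factor, and observes that the powers of $r$ then recombine to give, for \emph{every} $r$,
\[
r^{\kappa-d}I(r)\le c(d)\,N^{\kappa/d}\bigl(r^{-d}I(r)\bigr)^{1-q\kappa/d}\le c(d)\,N^{\kappa/d}\bigl(\M^d f(x)\bigr)^{1-q\kappa/d},
\]
so the supremum over $r$ is immediate. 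This sidesteps the optimisation and with it the (trivial) degenerate cases $\M^d f(x)\in\{0,\infty\}$ or $N\in\{0,\infty\}$ that your balancing argument should, strictly speaking, dispose of separately. Both routes produce the same constant $2^{q\kappa/d}$, up to the harmless extra factor of~$2$ the paper picks up in its first step.
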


\begin{proof}
Using Lemma~\ref{lem:Antti} and twice Hölder's inequality (I7) imply that
\[
\begin{split}
&\frac{r^\kappa}{ \Ha^{d}_\infty (B(x,r))} \int_{B(x, r)} |f(y)| \, d \Ha^{d}_\infty (y)
\\
&\le 2 \Ha^{d}_\infty (B(x,r))^{-1+\frac{\kappa}{d}} \Big( \int_{B(x, r)} |f(y)| \, d \Ha^{d}_\infty (y)\Big)^{\frac{q\kappa}{d}} \Big( \int_{B(x, r)} |f(y)| \, d \Ha^{n}_\infty (y)\Big)^{1-\frac{q\kappa}{d}} \\
&\le 2\Ha^{d}_\infty (B(x,r))^{-1+\frac{\kappa}{d}} \bigg( 2\Ha^{d}_\infty (B(x,r))^{\frac{1}{q'}} \Big(\int_{B(x, r)} |f(y)|^q \, d \Ha^{d}_\infty (y) \Big)^{\frac1q} \bigg)^{\frac{q\kappa}{d}}\\
&\qquad \cdot  \Big( \int_{B(x, r)} |f(y)| \, d \Ha^{d}_\infty (y)\Big)^{1-\frac{q\kappa}{d}} \\
&\le c(d) \Big(\int_{\Rn} |f(y)|^{q} \, d \Ha^{d}_\infty (y) \Big)^{\frac{\kappa}{d}} \Big( \Ha^{d}_\infty (B(x,r))^{-1} \int_{B(x, r)} |f(y)| \, d \Ha^{d}_\infty (y)\Big)^{1-\frac{q\kappa}{d}}.
\end{split}
\]
Taking supremum over all radii  $r$   gives  inequality \eqref{proposition_pointwise} for all $x$.
\end{proof}

Theorem~\ref{thm:M-bounded} and Proposition~\ref{pointwise} give the following result.

\begin{theorem}\label{thm:fractional-M-bounded}
Let  $n \ge 1$, $\delta\in(0, n]$, $\kappa \in [0, \delta)$, and $p \in (\delta/n, \delta/\kappa)$.
Then there exists a constant $c$ depending only on $n$, $\delta$, $\kappa$, and $p$ such that
\begin{equation}\label{strongineq1b}
\bigg(\int_\Rn (\M^n_\kappa f(x))^{\frac{\delta p}{\delta - p\kappa}} \, d \Ha^{\delta}_\infty \bigg)^{\frac{\delta - p\kappa}{\delta p}} \le c \bigg(\int_\Rn |f(x)|^p \, d \Ha^{\delta}_\infty\bigg)^{\frac1p}
\end{equation}
for all functions  $f:\Rn \to [-\infty, \infty]$.
\end{theorem}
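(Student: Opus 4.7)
The plan is to reduce \eqref{strongineq1b} to Theorem~\ref{thm:M-bounded} by combining the pointwise bound of Proposition~\ref{pointwise} with the Hausdorff content exchange inequality of Proposition~\ref{GeneralizationOV}. The case $\kappa = 0$ is immediate, since then $\M^n_\kappa = \M^n$ and the claim is just the $1/p$-th power of Theorem~\ref{thm:M-bounded}; so assume $\kappa \in (0, \delta)$.

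Set $q := pn/\delta$ and $r := \delta p/(\delta - p\kappa)$. The hypothesis $p \in (\delta/n, \delta/\kappa)$ translates exactly to $q \in (1, n/\kappa)$, which is the admissible range in Proposition~\ref{pointwise} with $d = n$. That proposition gives the pointwise bound
\[
\M^n_\kappa f(x) \le c \Big(\int_{\Rn} |f(y)|^{q}\, d \Ha^{n}_\infty(y)\Big)^{\kappa/n} (\M^n f(x))^{1 - q\kappa/n}
\]
for every $x \in \Rn$. The choice $q = pn/\delta$ is made precisely so that $r(1 - q\kappa/n) = p$: raising to the power $r$ turns the second factor into $(\M^n f)^p$. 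Since the first factor is a non-negative constant, it pulls out of the Choquet integral by property (I1); integrating over $\Rn$ against $\Ha^\delta_\infty$ and invoking Theorem~\ref{thm:M-bounded} (valid as $p > \delta/n$) then gives
\[
\int_\Rn (\M^n_\kappa f(x))^{r}\, d \Ha^\delta_\infty \le c\, I^{r\kappa/n} \int_\Rn |f(x)|^p\, d \Ha^\delta_\infty,
\]
where $I := \int_{\Rn} |f|^q\, d \Ha^n_\infty$.

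To convert $I$ from a content-$n$ to a content-$\delta$ quantity, I would apply Proposition~\ref{GeneralizationOV} with $\delta_2 = n$ and $\delta_1 = \delta$ to $|f|^q$; because $q\delta/n = p$, this yields $I^{1/n} \le c\bigl(\int_\Rn |f|^p\, d \Ha^\delta_\infty\bigr)^{1/\delta}$, whence $I^{r\kappa/n} \le c\, J^{r\kappa/\delta}$ with $J := \int_\Rn |f|^p\, d \Ha^\delta_\infty$. Substituting and taking $r$-th roots, the resulting exponent of $J$ is $(r\kappa/\delta + 1)/r = \kappa/\delta + 1/r = \kappa/\delta + (\delta - p\kappa)/(\delta p) = 1/p$, exactly the exponent on the right of \eqref{strongineq1b}. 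The only real difficulty is exponent bookkeeping; the forced choice $q = pn/\delta$ simultaneously arranges that the power of $\M^n f$ matches the one needed by Theorem~\ref{thm:M-bounded}, the power of $|f|$ matches the one needed by Proposition~\ref{GeneralizationOV}, and the final homogeneity in $J$ gives back $1/p$.
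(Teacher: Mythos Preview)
Your proof is correct and follows essentially the same route as the paper: Proposition~\ref{pointwise} with $d=n$ for the pointwise bound, Theorem~\ref{thm:M-bounded} for the $\M^n$ term, and Proposition~\ref{GeneralizationOV} to pass from $\Ha^n_\infty$ to $\Ha^\delta_\infty$, with the choice $q=pn/\delta$ making all exponents match. The only cosmetic difference is that you fix $q$ at the outset while the paper carries a generic $q$ and specialises at the end.
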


\begin{proof}
If $\kappa =0$, the claim follows straight from Theorem~\ref{thm:M-bounded}. We may assume that $\kappa >0$.
Let us  choose $d=n$ in Proposition \ref{pointwise}. By Theorem~\ref{thm:M-bounded} and Proposition~\ref{GeneralizationOV}  we obtain
\[
\begin{split}
\int_\Rn \big( \M^n_\kappa f(x) \big)^{\frac{n p}{n - q\kappa}}\, d \Ha^{\delta}_\infty &
\le c(n) \Big(\int_{\Rn} |f(y)|^{q} \, d \Ha^{n}_\infty \Big)^{\frac{ \kappa p}{n - q \kappa}}
\int_\Rn \big( \M^n f(x) \big)^{p}\, d \Ha^{\delta}_\infty \\
&\le c(n, \delta, p) \Big(\int_{\Rn} |f(y)|^{\frac{q\delta}{n}} \, d \Ha^{\delta}_\infty \Big)^{\frac{n \kappa p}{\delta(n - q \kappa)}}
\int_\Rn  |f(x)|^{p}\, d \Ha^{\delta}_\infty. 
\end{split}
\]
Let us write $q:= p \frac{n}{\delta}$.  Note that  the assumption $p \in (\delta/n, \delta/\kappa)$
yields that $1<q< \frac{n}{\kappa}$. Thus,  we have
\[
\int_\Rn \big( \M^n_\kappa f(x) \big)^{\frac{\delta p}{\delta - p\kappa}}\,d \Ha^{\delta}_\infty 
\le c
\Big(\int_{\Rn} |f(x)|^{p} \, d \Ha^{\delta}_\infty \Big)^{\frac{ \delta}{\delta - p\kappa}},
\]
which gives the claim.
\end{proof}

Let us study $\M^\delta_\kappa$, when  $d<n$. We are able to  use 
\cite[Theorem 1.2]{ChenOoiSpector2023}, if we assume that the functions are quasicontinuous.

\begin{theorem}\label{thm:fractional-M-bounded-2}
Let  $n \ge 1$, $\delta\in(0, n)$, $\kappa \in [0, \delta)$, and $p \in (1, \delta/\kappa)$.
Then there exists a constant $c$ depending only on $n$, $\delta$, $\kappa$, and $p$ such that
\begin{equation*}
\bigg(\int_\Rn (\M^\delta_\kappa f(x))^{\frac{\delta p}{\delta - p\kappa}} \, d \Ha^{\delta}_\infty \bigg)^{\frac{\delta - p\kappa}{\delta p}} \le c \bigg(\int_\Rn |f(x)|^p \, d \Ha^{\delta}_\infty\bigg)^{\frac1p}
\end{equation*}
for all $\Ha^\delta_\infty$-quasicontinuous function   $f : \Rn \to [- \infty, \infty]$.
\end{theorem}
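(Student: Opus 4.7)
The plan is to mimic the proof of Theorem~\ref{thm:fractional-M-bounded}, with the sole modification that instead of using Theorem~\ref{thm:M-bounded} to control $\M^n f$ in $\L^p(\Rn,\Ha^\delta_\infty)$, I would invoke \cite[Theorem 1.2]{ChenOoiSpector2023}, which bounds $\M^\delta$ on $\L^p(\Rn,\Ha^\delta_\infty)$ when $\delta\in(0,n)$, $p>1$, and $f$ is $\Ha^\delta_\infty$-quasicontinuous. The quasicontinuity hypothesis in the present statement is inherited from that ingredient and cannot be removed with the tools at hand.

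First, dispose of the trivial case $\kappa=0$: the exponent collapses to $p$ and the inequality is precisely \cite[Theorem 1.2]{ChenOoiSpector2023}. So I assume $\kappa\in(0,\delta)$. I would then apply Proposition~\ref{pointwise} with $d=\delta$ and $q=p$; this is legitimate because the hypothesis $p\in(1,\delta/\kappa)$ is exactly the admissible range for $q$ in Proposition~\ref{pointwise}. This yields the pointwise bound
\[
\M^\delta_\kappa f(x)\;\le\; c(\delta)\Big(\int_{\Rn}|f(y)|^{p}\,d\Ha^{\delta}_\infty\Big)^{\kappa/\delta}\big(\M^\delta f(x)\big)^{1-p\kappa/\delta}
\]
for every $x\in\Rn$. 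Note that $1-p\kappa/\delta>0$ by the assumption $p<\delta/\kappa$.

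Next I raise both sides to the power $\frac{\delta p}{\delta - p\kappa}$, which converts the exponent $1-p\kappa/\delta$ on $\M^\delta f(x)$ into $p$, and integrate with respect to $\Ha^\delta_\infty$ using the monotonicity property (I5):
\[
\int_\Rn\!\big(\M^\delta_\kappa f(x)\big)^{\frac{\delta p}{\delta - p\kappa}}d\Ha^\delta_\infty \le c\Big(\!\int_\Rn\!|f|^{p}d\Ha^\delta_\infty\!\Big)^{\frac{\kappa p}{\delta - p\kappa}}\!\int_\Rn\!\big(\M^\delta f(x)\big)^{p}d\Ha^\delta_\infty.
\]
Now I apply \cite[Theorem 1.2]{ChenOoiSpector2023} to the last factor (this is the step that requires $\Ha^\delta_\infty$-quasicontinuity and $\delta<n$):
\[
\int_\Rn\big(\M^\delta f(x)\big)^{p}d\Ha^\delta_\infty \;\le\; c(n,\delta,p)\int_\Rn |f|^{p}\,d\Ha^\delta_\infty.
\]

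Combining the last two displays, the total exponent on $\int_\Rn |f|^p\,d\Ha^\delta_\infty$ becomes $\frac{\kappa p}{\delta-p\kappa}+1=\frac{\delta}{\delta-p\kappa}$. Taking the $\frac{\delta-p\kappa}{\delta p}$-th root yields the desired inequality with the exponent $1/p$ on the right-hand side. The only genuine obstacle is the quasicontinuity hypothesis: without it, the non-fractional boundedness of $\M^\delta$ used at the crucial step is not available in the literature for $\delta<n$, which is why Theorem~\ref{thm:fractional-M-bounded} (handling $\M^n_\kappa$) could dispense with quasicontinuity but the present statement cannot.
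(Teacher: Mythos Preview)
Your proposal is correct and follows essentially the same argument as the paper: handle $\kappa=0$ directly by \cite[Theorem~1.2]{ChenOoiSpector2023}, and for $\kappa>0$ apply Proposition~\ref{pointwise} with $d=\delta$ and $q=p$, raise to the power $\frac{\delta p}{\delta-p\kappa}$, integrate, and then invoke \cite[Theorem~1.2]{ChenOoiSpector2023} for $\M^\delta$. The exponent bookkeeping and the role of the quasicontinuity hypothesis are exactly as in the paper.
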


\begin{proof}
In the case $\kappa=0$ the theorem is exactly  \cite[Theorem 1.2 (i)]{ChenOoiSpector2023}.
We may assume that $\kappa >0$. Let us choose in Proposition \ref{pointwise} that $d= \delta$ and $q=p$. Proposition~\ref{pointwise}
 and   \cite[Theorem 1.2 (i)]{ChenOoiSpector2023} yield that
\begin{align*}
\int_\Rn \big( \M^\delta_\kappa f(x) \big)^{\frac{\delta p}{\delta - p\kappa}}\, d \Ha^{\delta}_\infty  &\le c(\delta) \Big(\int_{\Rn} |f(y)|^{p} \, d \Ha^{\delta}_\infty  \Big)^{\frac{ \kappa p}{\delta - p \kappa}}
\int_\Rn \big( \M^\delta f(x) \big)^{p}\, d \Ha^{\delta}_\infty \\
&\le c \Big(\int_{\Rn} |f(y)|^{p} \, d \Ha^{\delta}_\infty  \Big)^{\frac{ \kappa p}{\delta - p \kappa}}
\int_\Rn  |f(x)|^{p}\, d \Ha^{\delta}_\infty\\
&\le c \Big(\int_{\Rn} |f(y)|^{p} \, d \Ha^{\delta}_\infty \Big)^{\frac{\delta}{\delta- p \kappa}}. \qedhere
\end{align*}
\end{proof}


The   Hausdorff content uncentered fractional maximal operator can be defined in an analogous way:
\begin{equation*}
\Mu^\delta_\kappa f(x) := \sup_{B_r \ni x} \frac{r^\kappa}{\Ha_\infty^\delta (B_r)} \int_{B_r} |f(y)| \, d \Ha_\infty^\delta(y),
\end{equation*}
where the supremum is taken over all open balls $B_r$, with radius $r$, containing $x$.
Lower semicontinuity of  the function $\Mu^\delta_\kappa f :\Rn\to [0,\infty]$ follows now by the definition.
Thus, the function $\Mu^\delta_\kappa f$ is Lebesgue measurable for all functions $f: \Rn \to [- \infty, \infty]$.

Moreover, these maximal operators are comparable, 
that is
\begin{equation}\label{comp}
\M^\delta_\kappa f(x) \le \Mu^\delta_\kappa f(x) \le 2^{\delta- \kappa} \M^\delta_\kappa f(x) \mbox{ for all } x \in \Rn.
\end{equation}
Namely,
let us fix a point $x \in \Rn$, and let $\ve >0$. By the definition of the maximal operator there exists  an open ball
$B(y, r)$  such that  $x \in B(y, r)$ and 
\[
 \Mu^\delta_\kappa f(x) \le \frac{r^\kappa}{\Ha_\infty^\delta (B(y, r))} \int_{B(y, r)} |f(z)| \, d \Ha_\infty^\delta(z) + \ve\,.
\]
Then,  by Lemma~\ref{lem:Antti} 
\[
\begin{split}
\Mu^\delta_\kappa f(x) &\le  \frac{r^\kappa}{\Ha_\infty^\delta (B(y,r))} \int_{B(x, 2r)} |f(z)| \, d \Ha_\infty^\delta(z) + \ve\\
&\le  2^{-\kappa} \frac{\Ha_\infty^\delta (B(x,2r))}{\Ha_\infty^\delta (B(y,r))} \M^\delta_\kappa f(x) + \ve
\le 2^{\delta- \kappa}  \M^\delta_\kappa f(x) +\ve. 
\end{split}
\]
Letting $\ve \to 0^+$, we obtain $\Mu^\delta_\kappa f(x)\le 2^{\delta- \kappa}\M^\delta_\kappa f(x)$,  and this for all $x\in\Rn$.
The inequality $\M^\delta_\kappa f(x) \le \Mu^\delta_\kappa f(x)$ follows from the definition.
Because of inequality \eqref{comp} Theorem~\ref{thm:fractional-M-bounded} yields the following corollary.

\begin{corollary}\label{cor:tildeM-bounded}
Let  $n \ge 1$, $\delta\in(0, n]$, $\kappa \in [0, \delta)$ and $p \in (\delta/n, \delta/\kappa)$.
Then there exists a constant $c$ depending only on $n$, $\delta$, $\kappa$, and $p$ such that
\begin{equation*}
\bigg(\int_\Rn (\Mu^n_\kappa f(x))^{\frac{\delta p}{\delta - p\kappa}} \, d \Ha^{\delta}_\infty \bigg)^{\frac{\delta - p\kappa}{\delta p}} \le c \bigg(\int_\Rn |f(x)|^p \, d \Ha^{\delta}_\infty\bigg)^{\frac1p}
\end{equation*}
for all functions  $f:\Rn \to [-\infty, \infty]$.
\end{corollary}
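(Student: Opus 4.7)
My plan is to derive this corollary directly from the pointwise comparison \eqref{comp} combined with Theorem~\ref{thm:fractional-M-bounded}, with no essentially new analysis required. First, I would specialise \eqref{comp} with its $\delta$ replaced by $n$ (the inequality \eqref{comp} is stated for arbitrary $\delta$, and the elementary argument given there works verbatim when $\delta=n$), which gives
\[
\Mu^n_\kappa f(x) \le 2^{n-\kappa}\,\M^n_\kappa f(x) \qquad \text{for every } x\in\Rn,
\]
valid for every function $f:\Rn\to[-\infty,\infty]$, measurable or not.

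Since the hypothesis $p<\delta/\kappa$ forces $\delta-p\kappa>0$, the exponent $q:=\frac{\delta p}{\delta-p\kappa}$ is strictly positive, and hence the map $t\mapsto t^{q}$ is monotone on $[0,\infty]$. Raising the pointwise estimate above to the $q$-th power and integrating against $\Ha^\delta_\infty$, using monotonicity in the integrand (I5) and positive homogeneity (I1) from Lemma~\ref{lem:Integral-basic-properties}, I obtain
\[
\int_\Rn \bigl(\Mu^n_\kappa f(x)\bigr)^{q} \, d\Ha^\delta_\infty \;\le\; 2^{(n-\kappa)q}\int_\Rn \bigl(\M^n_\kappa f(x)\bigr)^{q} \, d\Ha^\delta_\infty.
\]
Raising to the power $1/q$ and applying Theorem~\ref{thm:fractional-M-bounded} to the right-hand side then produces the claim with constant $c = 2^{n-\kappa}\,c_0$, where $c_0=c_0(n,\delta,\kappa,p)$ is the constant from Theorem~\ref{thm:fractional-M-bounded}.

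There is no real obstacle here: once one observes that the pointwise comparison \eqref{comp} makes no measurability assumption on $f$ and that (I1) together with (I5) are the only properties of the Choquet integral needed to pass from a pointwise inequality to an integrated one, the corollary follows immediately. The only routine verifications are that $q>0$, which is forced by $p<\delta/\kappa$, and that the admissible range $p\in(\delta/n,\delta/\kappa)$ matches the hypothesis of Theorem~\ref{thm:fractional-M-bounded} exactly, so no further restriction on the parameters is introduced.
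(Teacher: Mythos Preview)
Your proposal is correct and matches the paper's own argument exactly: the paper simply states that the corollary follows from the pointwise comparison \eqref{comp} together with Theorem~\ref{thm:fractional-M-bounded}. Your write-up just makes explicit the routine use of (I1) and (I5) to pass from the pointwise bound to the integrated one.
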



We define Hausdorff content sharp maximal function.
Let $f: \Rn \to [- \infty, \infty]$ be a function and let $B$ be an open ball in $\Rn$. If $\delta \in (0,n]$ and $f$ is 
locally integrable with respect to the $\delta$-dimensional Hausdorff content, we write for the integral  average in the sense of Choquet integrals that
\begin{equation*}
f_{B,\delta} :=  \frac{1}{\Ha^{\delta}_\infty (B)} \int_B |f(y)| \, d\Ha^{\delta}_\infty
\end{equation*}
and define the Hausdorff content sharp maximal function by setting
\begin{equation*}
\M^{\#}_{\delta} f(x) :=  \sup_{B \ni x} \frac{1}{\Ha^{\delta}_\infty (B)} \int_B |f (y)-f_{B,\delta} |\, d \Ha^{\delta}_\infty,
\end{equation*}
where the supremum is taken over all open balls containing $x$.
 If $\delta =n$, we write shortly
 $f_{B,\delta} =:f_B$ and $\M^{\#}_{n} f(x)=:\M^{\#} f(x)$.
This sharp maximal function is comparable to the Fefferman--Stein  sharp maximal function for Lebesgue measurable functions.

\begin{proposition}
Suppose that $n \ge 1$,  $\delta\in(0, n]$ and $p \in (\delta/n, \infty)$.
Then, for all functions  $f\in\L^p(\Rn, \Ha^{\delta}_\infty)$
\begin{equation*}
\int_\Rn (\M^{\#} f(x))^p \, d \Ha^{\delta}_\infty \le c \int_\Rn |f(x)|^p \, d \Ha^{\delta}_\infty\,,
\end{equation*}
where a constant $c$ depends only on $n$, $\delta$, and $p$.
\end{proposition}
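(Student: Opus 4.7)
The plan is to reduce the claim to Theorem \ref{thm:M-bounded} via a pointwise domination of the form $\M^{\#} f(x) \le c(n)\, \M^n f(x)$ for every $x \in \Rn$. Once such an inequality is established, raising it to the $p$-th power, applying monotonicity (I5) of the Choquet integral, and invoking Theorem \ref{thm:M-bounded} immediately produces the desired strong type inequality.

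To obtain the pointwise bound, fix $x \in \Rn$ and an open ball $B$ containing $x$. The key observation is that the average
\[
f_B = \frac{1}{\Ha^n_\infty(B)} \int_B |f(y)| \, d\Ha^n_\infty
\]
is a non-negative constant, so the triangle inequality gives $|f(y) - f_B| \le |f(y)| + f_B$ pointwise. Monotonicity (I5) and quasi-subadditivity (I6) then yield
\[
\int_B |f(y) - f_B| \, d\Ha^n_\infty \le 2 \int_B |f(y)| \, d\Ha^n_\infty + 2 \int_B f_B \, d\Ha^n_\infty.
\]
By (I1) and (I3) the last integral equals $f_B \cdot \Ha^n_\infty(B)$, which, by the very definition of $f_B$, is $\int_B |f(y)| \, d\Ha^n_\infty$. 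Dividing through by $\Ha^n_\infty(B)$ shows that the integral average of $|f - f_B|$ is at most four times the integral average of $|f|$, and taking the supremum over all open balls $B \ni x$ produces $\M^{\#} f(x) \le 4\, \Mu^n f(x)$. The comparability \eqref{comp} between $\Mu^n$ and $\M^n$ upgrades this to $\M^{\#} f(x) \le 2^{n+2}\, \M^n f(x)$ for every $x \in \Rn$.

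With the pointwise inequality in hand, raising to the $p$-th power, integrating against $\Ha^\delta_\infty$, and applying Theorem \ref{thm:M-bounded} finishes the argument:
\[
\int_\Rn (\M^{\#} f(x))^p \, d\Ha^\delta_\infty \le c(n)^p \int_\Rn (\M^n f(x))^p \, d\Ha^\delta_\infty \le c(n, \delta, p) \int_\Rn |f(x)|^p \, d\Ha^\delta_\infty.
\]
The only bookkeeping needed is to track the quasi-subadditivity constants produced by the factors of $2$ in (I6) and the passage from the uncentred to the centred maximal operator via \eqref{comp}; no Calderón--Zygmund-type decomposition is required, since Theorem \ref{thm:M-bounded} already encapsulates the main work specific to the Hausdorff content setting. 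The mild subtlety to keep in mind is that $|f(y) - f_B|$ must be interpreted in $[0,\infty]$ when $f$ assumes the value $\infty$ on a null set for $\Ha^\delta_\infty$; this is harmless since the membership $f \in \L^p(\Rn, \Ha^\delta_\infty)$ forces $|f| < \infty$ outside a set of $\Ha^\delta_\infty$-capacity zero, and the Choquet integrals are insensitive to such sets by (I2).
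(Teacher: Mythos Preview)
Your proof is correct and follows essentially the same approach as the paper: both establish the pointwise bound $\M^{\#} f(x) \le 4\, \Mu^n f(x)$ via the triangle inequality and (I6), then reduce to the boundedness of the Hausdorff content maximal operator. The only cosmetic difference is that the paper invokes Corollary~\ref{cor:tildeM-bounded} directly for $\Mu^n$, whereas you pass from $\Mu^n$ to $\M^n$ via \eqref{comp} and then apply Theorem~\ref{thm:M-bounded}; since Corollary~\ref{cor:tildeM-bounded} is itself derived from \eqref{comp} and Theorem~\ref{thm:M-bounded}, the two routes are identical.
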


\begin{proof} By the definition  and inequality (I6)  we obtain the estimate
\[
\M^{\#} f(x) \le  \sup_{B \ni x} \frac{2}{\Ha^n_\infty (B)} \int_B |f(y)| \, d\Ha^n_\infty + \sup_{B \ni x} \frac{2}{\Ha^n_\infty (B)} \int_B f_B  \, d\Ha^n_\infty
\le  4 \Mu^n f(x).
\]
 The claim follows now  by Corollary~\ref{cor:tildeM-bounded}.
\end{proof}


\section{Mapping properties of Hausdorff content Riesz potential}\label{Section_RieszPotential}

Suppose that $n \ge 1$, $\delta \in (0, n]$ and $\alpha \in(0, \delta)$.
We define the  Hausdorff content Riesz potential  of a  given function $f$  as a Choquet integral with respect to the $\delta$-dimensional  Hausdorff content
as follows
\begin{equation}\label{newRiesz}
\Ri_\alpha^\delta f(x) := \int_{\Rn} \frac{|f(y)|}{|x-y|^{\delta- \alpha}} \, d \Ha_\infty^{\delta} (y)  
\end{equation}
for all $x\in\Rn$.
We estimate this Riesz potential by a Hedberg pointwise inequality  where
the integrals are considered as Choquet integrals.
The classical version of this pointwise estimate goes back to  Hedberg \cite[Theorem~1]{Hed72}. We use the fractional maximal function  by following the idea of  Adams \cite[Proposition 3.1]{Adams1975}.
We refer also to \cite[Lemma 3.6]{HH-S_JFA} and \cite[Lemma 2]{HH-S_AWM}.

\begin{lemma}\label{lem:Choquet-Hedberg}
Let  $n\geq 1$, $\delta \in (0, n]$, $\alpha \in (0, \delta)$, $\kappa \in [0, \alpha)$,  and $p \in [1, \delta/\alpha)$ be given.
Then for all functions $f: \Rn \to [- \infty, \infty]$ and 
for every $x \in \Rn$
\[
\Ri_\alpha^\delta f(x)
\le c 
\bigg(\M^\delta_\kappa f(x)\bigg)^{\frac{\delta-p \alpha}{\delta-\kappa p}}
\bigg(\int_{\Rn} |f(y)|^{p} \, d \Ha^{\delta}_\infty(y) \bigg)^{\frac{\alpha-\kappa}{\delta -\kappa p}},   
\]
where  $c$  is a constant which depends only on $n$, $\alpha$, $\delta$, $\kappa$, and $p$.
\end{lemma}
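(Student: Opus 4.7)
I would follow the classical Hedberg argument adapted to the Choquet setting: for an auxiliary radius $r>0$ split the Choquet integral defining $\Ri^\delta_\alpha f(x)$ into the near part over $B(x,r)$ and the far part over $\Rn\setminus B(x,r)$ using quasi-sublinearity (Theorem~\ref{DennebergThm}); bound the first by a positive power of $r$ times $\M^\delta_\kappa f(x)$ and the second by a negative power of $r$ times $\|f\|_{\L^p(\Rn,\Ha^\delta_\infty)}$; and finally choose $r$ to balance the two contributions.

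For the near part, decompose $B(x,r)\setminus\{x\}$ into dyadic annuli $A_k:=B(x,2^{-k}r)\setminus B(x,2^{-k-1}r)$, $k\ge 0$, on which $|x-y|^{-(\delta-\alpha)}\le (2^{-k-1}r)^{-(\delta-\alpha)}$. Applying Theorem~\ref{DennebergThm} to the sum $\sum_k |f|\chi_{A_k}$, together with Proposition~\ref{lem:Antti} (which gives $\Ha^\delta_\infty(B(x,\rho))=\rho^\delta$) and the definition of the fractional maximal function, leads to
$$\int_{B(x,r)}\frac{|f(y)|}{|x-y|^{\delta-\alpha}}\,d\Ha^\delta_\infty \le c(n)\sum_{k=0}^\infty (2^{-k-1}r)^{-(\delta-\alpha)}(2^{-k}r)^{\delta-\kappa}\M^\delta_\kappa f(x).$$
The resulting geometric series has ratio $2^{-(\alpha-\kappa)}$, which converges thanks to the hypothesis $\alpha>\kappa$, producing a bound of the form $c\,r^{\alpha-\kappa}\M^\delta_\kappa f(x)$.

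For the far part with $p>1$, I would apply H\"older's inequality for Choquet integrals~(I7):
$$\int_{\Rn\setminus B(x,r)}\frac{|f(y)|}{|x-y|^{\delta-\alpha}}\,d\Ha^\delta_\infty \le 2\|f\|_{\L^p(\Rn,\Ha^\delta_\infty)}\bigg(\int_{\Rn\setminus B(x,r)}|x-y|^{-(\delta-\alpha)p'}\,d\Ha^\delta_\infty\bigg)^{1/p'}.$$
The remaining Choquet integral is computed via the layer-cake formula and Proposition~\ref{lem:Antti}: the superlevel sets are balls centred at $x$ of explicitly computable radius, so the computation reduces to a one-dimensional power integral that converges exactly when $p<\delta/\alpha$ and evaluates to $c\,r^{\delta-(\delta-\alpha)p'}$; extracting the $1/p'$-th power gives a bound of the form $c\,r^{\alpha-\delta/p}\|f\|_{\L^p(\Rn,\Ha^\delta_\infty)}$. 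For $p=1$, where~(I7) is not available, the same bound follows directly from the pointwise inequality $|x-y|^{-(\delta-\alpha)}\le r^{\alpha-\delta}$ valid on $\Rn\setminus B(x,r)$ combined with monotonicity~(I5).

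Collecting the two estimates gives $\Ri^\delta_\alpha f(x)\le c\,r^{\alpha-\kappa}\M^\delta_\kappa f(x)+c\,r^{\alpha-\delta/p}\|f\|_{\L^p(\Rn,\Ha^\delta_\infty)}$ for every $r>0$. Optimizing by choosing $r=(\|f\|_{\L^p(\Rn,\Ha^\delta_\infty)}/\M^\delta_\kappa f(x))^{p/(\delta-\kappa p)}$ (with the degenerate cases $\M^\delta_\kappa f(x)\in\{0,\infty\}$ or $\|f\|_{\L^p}=0$ handled by inspection) and simplifying the exponents yields exactly the claimed inequality. The main technical point is that the two hypotheses $\kappa<\alpha$ and $p<\delta/\alpha$ are precisely what ensure convergence of the dyadic geometric series in the near part and of the one-dimensional layer-cake integral in the far part, respectively; the constants arising from repeated use of quasi-sublinearity are absorbed into the final constant $c$.
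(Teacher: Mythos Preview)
Your proposal is correct and follows essentially the same approach as the paper: the dyadic-annulus decomposition of the near part controlled by $\M^\delta_\kappa f(x)$ via Theorem~\ref{DennebergThm} and Proposition~\ref{lem:Antti}, the H\"older/layer-cake treatment of the far part (with the $p=1$ case handled separately by the trivial pointwise bound), and the final optimization in $r$ all match the paper's argument. The only differences are cosmetic (indexing of the annuli and the explicit mention of degenerate cases), and you have correctly identified that the hypotheses $\kappa<\alpha$ and $p<\delta/\alpha$ are exactly what make the two series/integrals converge.
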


\begin{proof}
We may assume that $f \in \L^{p}(\Rn, \Ha^{\delta}_\infty)$.
Let $r>0$ and $x\in\Rn$.  Let us  define an annulus  for each $j=1, 2,  \dots$,  such that
$A_j : =\{y \in \Rn : 2^{-j} r \le |x-y|< 2^{-j+1}r\}$. 
Then,  quasi-sublinearity of Choquet integral Theorem~\ref{DennebergThm} yields that
\[
\begin{split}
\int_{B(x, r)} \frac{|f(y)|}{|x-y|^{\delta-\alpha}} \, d \Ha_\infty^{\delta}(y)
&\le c \sum_{j=1}^\infty \int_{A_j} \frac{|f(y)|}{|x-y|^{\delta-\alpha}} \, d\Ha_\infty^{\delta}(y)\\
&\le  \sum_{j=1}^\infty (2^{-j}r)^{\alpha-\delta} \int_{B(x, 2^{-j+1}r)} |f(y)| \, d\Ha_\infty^{\delta}(y)\\
&\le  \frac{2^\delta}{2^\alpha - 2^{\kappa}} r^{\alpha-\kappa} \M_\kappa^\delta f(x), 
\end{split}
\]
where in the last step  the assumption $\kappa<\alpha$  is used in calculation for the sum of a geometric series.

Assume then that $p>1$.
When the integral is taken over the complement of a ball  $B(x,r)$  with respect to $\Rn$, Hölder's inequality (I7) implies that
\[
\begin{split}
&\int_{\Rn\setminus B(x, r)} \frac{|f(y)|}{|x-y|^{\delta-\alpha}} \, d \Ha_\infty^{\delta}(y)\\
&\le 2 \Big(\int_{\Rn\setminus B(x, r)} |f(y)|^{p}\, d\Ha_\infty^{\delta}(y) \Big)^{\frac{1}{p}} 
\Big(\int_{\Rn\setminus B(x, r)} |x-y|^{\frac{p(\alpha-\delta)}{p-1}} \, d \Ha_\infty^{\delta}(y)\Big)^{\frac{p-1}{p}}.
\end{split}
\]
We calculate the last integral on the right-hand side of the above inequality. Let us write that $\eta:= \frac{p(\alpha-\delta)}{p-1}$ and note that $\eta <0$. 
Since $y \not \in B(x,r)$, the supremum  of $|x-y|^\eta$ is $r^\eta$.  Moreover, 
\[
 \big\{ y\in \Rn\setminus B(x, r) : |x-y|^\eta > t \big\}   \subset B\big(x, t^{\frac1\eta}\big) \setminus B(x, r)\,,
\]
where $t>0$.
Hence we obtain
\[
\begin{split}
\int_{\Rn\setminus B(x, r)} |x-y|^{\frac{p(\alpha-\delta)}{p-1}} \, d \Ha_\infty^{\delta}(y)
& = \int_0^\infty \Ha_\infty^{\delta}\Big( \big\{ y\in \Rn\setminus B(x, r) : |x-y|^\eta > t \big\} \Big) \,  dt \\
& = \int_0^{r^\eta} \Ha_\infty^{\delta}\Big( \big\{ y\in \Rn\setminus B(x, r) : |x-y|^\eta > t \big\} \Big) \,   dt \\
& \le \int_0^{r^\eta} t^{\frac{\delta}{\eta}} \, dt \le c(\delta, \alpha, p) r^{\delta + \eta}
= c(\delta, \alpha, p) r^{\delta + \frac{p(\alpha-\delta)}{p-1}}.
\end{split}
\]
The last integral above  converges, since
the assumption  $p<\delta/\alpha$ implies that $\frac{\delta}{\eta}>-1$.
Thus we have the estimate
\begin{equation}\label{equ:Hedberg-estimate}
\Ri_\alpha^\delta f(x) \le c(\delta, \alpha, \kappa, p)  \big( r^{\alpha-\kappa} \M_\kappa^\delta f(x) +  \|f\| r^{\alpha-\frac{\delta}{p}} \big),
\end{equation}
where $\|f\| := \Big(\int_{\Rn} |f(y)|^{p} \, d \Ha^{\delta}_\infty \Big)^{\frac{1}{p}}$.

If $p=1$, then 
\[
\int_{\Rn\setminus B(x, r)} \frac{|f(y)|}{|x-y|^{\delta-\alpha}} \, d \Ha_\infty^{\delta}(y) \le r^{\alpha-\delta}
\int_{\Rn}|f(y)| \, d \Ha_\infty^{\delta}(y).
\]
Hence we obtain  inequality \eqref{equ:Hedberg-estimate} also in this case.

We may assume that $\|f\| >0$.
Choosing 
\[
r= \bigg( \frac{\M_\kappa^\delta f(x)}{\|f\|} \bigg)^{-\frac{p}{\delta - \kappa p}}
\]
in  \eqref{equ:Hedberg-estimate} implies that
\[
\Ri_\alpha^\delta f(x) \le c (\M_\kappa^\delta f(x))^{1-\frac{p(\alpha-\kappa)}{\delta- \kappa p}} \|f\|^{\frac{p(\alpha-\kappa)}{\delta-\kappa p}}
\]
 for all $x \in \Rn$.
 This inequality yields the claim.
\end{proof}

Theorem~\ref{thm:M-bounded}  combined with Lemma \ref{lem:Choquet-Hedberg} gives Theorem
\ref{thm:Choquet-Hedberg}.
We refer to \cite[Remark 3.13]{HH-S_JFA}
for a corresponding result where the classical Riesz potential defined without Choquet integral and  its boundedness with Choquet integrals and with respect to Hausdorff content
is considered. This previous result 
follows from \cite[Lemma 3.6]{HH-S_JFA} and 
\cite[Theorem 7(a)]{Adams1998}. 
The other difference  between
Theorem \ref{thm:Choquet-Hedberg} and \cite[Remark 3.13]{HH-S_JFA}
 is that the functions are not necessarily Lebesgue measurable in the following theorem.

\begin{theorem}\label{thm:Choquet-Hedberg}
Let  $n\geq 1$, $\delta \in (0, n]$, $\alpha \in (0, \delta)$,  and $p \in (\delta/n, \delta/\alpha)$
 be given. Then for all functions $f: \Rn \to [- \infty, \infty]$
\begin{equation}\label{CH}
\bigg(\int_\Rn \big(\Ri_\alpha^n f(x) \big)^{\frac{\delta p}{\delta-p \alpha}}\, d \Ha_\infty^{\delta} \bigg)^{\frac{\delta-p \alpha}{\delta p}}
\le c \bigg(\int_{\Rn} |f(x)|^{p} \, d \Ha^{\delta}_\infty \bigg)^{\frac1p }\,,
\end{equation}
where $c$ is a constant which depends only on $n$, $\delta$,  $\alpha$, $\kappa$,  
and $p$.
\end{theorem}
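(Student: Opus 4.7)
The plan is to derive a Hedberg-type pointwise estimate for $\Ri_\alpha^n f(x)$ in which the $L^p$-norm with respect to $\Ha_\infty^\delta$ (not $\Ha_\infty^n$) appears on the right, then raise to the appropriate power, integrate against $\Ha_\infty^\delta$, and finish with the strong-type maximal bound of Theorem~\ref{thm:M-bounded}. The bridge between the $\Ha_\infty^n$-integral produced by Lemma~\ref{lem:Choquet-Hedberg} and the $\Ha_\infty^\delta$-norm required by the statement is Proposition~\ref{GeneralizationOV}.

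First I would apply Lemma~\ref{lem:Choquet-Hedberg} with its ``$\delta$'' set to $n$, with $\kappa=0$, and with the \emph{shifted} exponent $p' := np/\delta$ in place of the Lemma's $p$. The theorem's hypothesis $p \in (\delta/n, \delta/\alpha)$ is precisely equivalent to $p' \in (1, n/\alpha) \subset [1, n/\alpha)$, so the Lemma applies and yields
\[
\Ri_\alpha^n f(x) \le c \, \big(\M^n f(x)\big)^{(n - p'\alpha)/n} \Big(\int_\Rn |f(y)|^{p'} \, d\Ha_\infty^n\Big)^{\alpha/n}.
\]
Next, Proposition~\ref{GeneralizationOV} applied to $|f|^{p'}$ with $\delta_1 = \delta$, $\delta_2 = n$ gives
\[
\int_\Rn |f|^{p'} \, d\Ha_\infty^n \le c \, \Big(\int_\Rn |f|^{p'\delta/n} \, d\Ha_\infty^\delta\Big)^{n/\delta} = c \, \Big(\int_\Rn |f|^p \, d\Ha_\infty^\delta\Big)^{n/\delta},
\]
since by construction $p'\delta/n = p$. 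Substituting and using the identity $(n - p'\alpha)/n = (\delta - p\alpha)/\delta$ produces the mixed Hedberg inequality
\[
\Ri_\alpha^n f(x) \le c \, \big(\M^n f(x)\big)^{(\delta - p\alpha)/\delta} \, \|f\|_{\L^p(\Rn, \Ha_\infty^\delta)}^{p\alpha/\delta}.
\]

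Then I would raise this pointwise inequality to the power $q := \delta p/(\delta - p\alpha)$ and integrate against $\Ha_\infty^\delta$ over $\Rn$. A short check shows $q \cdot (\delta - p\alpha)/\delta = p$ and $q \cdot p\alpha/\delta = p^2\alpha/(\delta - p\alpha)$, so
\[
\int_\Rn \big(\Ri_\alpha^n f(x)\big)^q \, d\Ha_\infty^\delta \le c \, \|f\|_{\L^p(\Rn, \Ha_\infty^\delta)}^{p^2\alpha/(\delta - p\alpha)} \int_\Rn \big(\M^n f(x)\big)^p \, d\Ha_\infty^\delta.
\]
The hypothesis $p > \delta/n$ is precisely what Theorem~\ref{thm:M-bounded} demands, so the last integral is at most $c \, \|f\|_{\L^p(\Rn, \Ha_\infty^\delta)}^p$. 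Since the two exponents on $\|f\|$ add to $p^2\alpha/(\delta - p\alpha) + p = p\delta/(\delta - p\alpha) = q$, taking the $q$-th root yields \eqref{CH}.

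The only nonroutine step is the first one: the Lemma in its direct form delivers an $L^{p'}$-norm of $f$ against the ``wrong'' Hausdorff content $\Ha_\infty^n$, and one has to introduce the artificial exponent $p' = np/\delta$ so that Proposition~\ref{GeneralizationOV} converts $\int |f|^{p'} d\Ha_\infty^n$ exactly into a power of $\int |f|^p d\Ha_\infty^\delta$. Once this bookkeeping is made, the two exponents produced by the Lemma collapse cleanly, and the remainder of the argument is merely Theorem~\ref{thm:M-bounded} composed with the optimisation in $r$ already hidden inside Lemma~\ref{lem:Choquet-Hedberg}.
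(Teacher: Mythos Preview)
Your proof is correct and follows essentially the same route as the paper: apply Lemma~\ref{lem:Choquet-Hedberg} with its ``$\delta$'' equal to $n$ and the shifted exponent $np/\delta$, convert the resulting $\Ha_\infty^n$-integral into an $\Ha_\infty^\delta$-integral via Proposition~\ref{GeneralizationOV}, and finish with Theorem~\ref{thm:M-bounded}. The only cosmetic difference is that the paper carries a general $\kappa$ through the Hedberg step and sets $\kappa=0$ only at the end, whereas you fix $\kappa=0$ from the outset, which streamlines the bookkeeping without changing the argument.
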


\begin{proof}
We may assume that $f \in \L^{p}(\Rn, \Ha^{\delta}_\infty)$.
Let  $q \in [1, n/\alpha)$  be fixed  with $\alpha\in (0,n)$. Applying  Lemma~\ref{lem:Choquet-Hedberg} with $\delta =n$,  $\alpha\in (0,n)$, and
$\kappa\in [0,n)$
for this parameter $q$
implies that
 for all $x\in\Rn$
\begin{equation}\label{Case_n_meas-2}
\Ri_\alpha^n f(x)
\le c 
\bigg(\M^n_\kappa f(x)\bigg)^{\frac{n-q \alpha}{n-\kappa q}}
\bigg(\int_{\Rn} |f(y)|^{q} \, d \Ha^{n}_\infty(y) \bigg)^{\frac{\alpha-\kappa}{n -\kappa q}}\,,
\end{equation}
whenever $f \in \L^q(\Rn, \Ha^{n}_\infty)$.
Now we  replace $q$
in \eqref{Case_n_meas-2}
by $pn/\delta$. Since 
$q \in [1, n/\alpha )$ in \eqref{Case_n_meas-2},
$1 \le pn/\delta < n/\alpha$ i.e.\ $\delta/n \le p < \delta/\alpha$.
The assumptions of the present theorem means that here
$\delta\in (0,n]$, $\alpha\in (0,\delta )$, and $\kappa\in [0,\alpha )$.    
Hence, for all $x\in\Rn$
\[
\big(\Ri_\alpha^n f(x) \big)^{\frac{\delta - \kappa p}{\delta-p \alpha}}
\le c 
\Big(\M^n_\kappa f(x)\Big)
\bigg(\int_{\Rn} |f(y)|^{\frac{pn}\delta} \, d \Ha^{n}_\infty(y) \bigg)^{\frac{\alpha - \kappa }{n - n p \alpha/\delta}}\,,
\]
whenever $f \in \L^{pn/\delta}(\Rn, \Ha^{n}_\infty)$.
 As before we note that
 by
 Proposition~\ref{GeneralizationOV}
the inclusion  $\L^{p}(\Rn, \Ha^{\delta}_\infty) \subset \L^{pn/\delta}(\Rn, \Ha^{n}_\infty)$ holds.
Estimating the last integral by Proposition~\ref{GeneralizationOV} and  then taking both sides to the $p$th power imply that
 for all $x\in\Rn$
\[
\big(\Ri_\alpha^n f(x) \big)^{\frac{p (\delta- \kappa p) }{\delta-p \alpha}}
\le c(n, \delta)
\Big(\M^n_\kappa f(x)\Big)^p
\bigg( \int_{\Rn} |f(y)|^{p} \, d \Ha^{\delta}_\infty (y)\bigg)^{\frac{p (\alpha- \kappa)}{\delta -  p \alpha}} \,,
\]
whenever $f \in \L^p(\Rn, \Ha^{\delta}_\infty)$.
Then we choose $\kappa =0$ and 
use the boundedness of the Hausdorff content maximal function Theorem~\ref{thm:M-bounded} to obtain
\[
\begin{split}
\int_\Rn \big(\Ri_\alpha^n f(x) \big)^{\frac{p\delta}{\delta-p \alpha}}\, d \Ha_\infty^{\delta}
&\le c \bigg(\int_{\Rn} |f(y)|^{p} \, d \Ha^{\delta}_\infty(y) \bigg)^{\frac{p\alpha}{\delta - p \alpha}}
\int_\Rn \Big(\M^n f(x)\Big)^{p}\, d \Ha_\infty^{\delta}(x)\\
&\le c \bigg(\int_{\Rn} |f(y)|^{p} \, d \Ha^{\delta}_\infty(y) \bigg)^{\frac{p\alpha}{\delta - p \alpha}}
\int_\Rn  |f(x)|^{p}\, d \Ha_\infty^{\delta}(x)\\
&\le c \bigg(\int_{\Rn} |f(y)|^{p} \, d \Ha^{\delta}_\infty (y)\bigg)^{\frac1p \frac{p \delta}{\delta - p \alpha}} \,,
\end{split}
\]
whenever $f \in \L^p(\Rn, \Ha^{\delta}_\infty)$.
\end{proof}

\begin{remark}\label{SmallComments}
(1) By the proof  of the previous theorem 
inequality  \eqref{CH} is possibly  valid for a larger class of functions than stated,  namely for functions in 
$\L^{pn/\delta}(\Rn, \Ha^{n}_\infty)$,  because of Proposition \ref{GeneralizationOV}.
\\
(2) 
We point out that the
exponent $\frac{\delta p}{\delta-p \alpha}$  in the left-hand side of \eqref{CH} in Theorem~\ref{thm:Choquet-Hedberg} is larger than the exponent $\frac{\delta p}{ \delta - p \kappa }$ 
in the left-hand side of \eqref{strongineq1b}
in Theorem~\ref{thm:fractional-M-bounded} where maximal functions were considered.
\end{remark}

 Next two theorems consider  Lebesgue measurable functions.
 By \eqref{compareMM^n} we have $\M^n_\kappa f(x) \approx  M_\kappa f(x)$, 
where $\kappa \in [0, n)$.
That is,  for Lebesgue measurable functions these two maximal operators are comparable to each other with constants depending only on $n$.
Also, the  Hausdorff content Riesz potential  $\Ri_\alpha^{\delta} f$ whenever $\delta =n$ 
 is comparable to the classical 
$\alpha$-dimensional Riesz potential
\[
x \mapsto \int_\Rn \frac{|f(y)|}{|x-y|^{n- \alpha}} \, dy
\]
for all Lebesgue measurable $f: \Rn \to [-\infty, \infty]$.

Adams studied boundedness of $M_\kappa$ in \cite{Adams1998}. By combining his boundedness theorem
\cite[Theorem 7]{Adams1998} with   Lemma~\ref{lem:Choquet-Hedberg}
we obtain the following result for measurable functions.
The case $\alpha =1$ was given already  in  \cite[Remark 3.13]{HH-S_JFA}.

\begin{remark}
The difference between Theorem~\ref{thm:Choquet-Hedberg} and Theorem~\ref{thm:Choquet-Hedberg-measurable}   is that the latter covers only Lebesgue measurable functions
and in 
Theorem~\ref{thm:Choquet-Hedberg-measurable}  a parameter $\kappa>0$ is involved. This means that 
the dimension of the Hausdorff content 
could be taken smaller on the left-hand side of 
\eqref{rmk313}  than on  its right-hand side where as 
the dimension of the Hausdorff content stays the same on the both sides of  inequality
\eqref{CH}.
\end{remark}

\begin{theorem}\label{thm:Choquet-Hedberg-measurable}
If  $n\geq 1$, $\delta \in (0, n]$, $\alpha \in (0, \delta)$, $\kappa \in [0, \alpha)$,  and $p \in (\delta/n, \delta/\alpha)$
 are given, then 
for  all Lebesgue measurable  functions $f: \Rn \to [- \infty, \infty]$
\begin{equation}\label{rmk313}
\bigg(\int_\Rn \big(\Ri_\alpha^n f(x) \big)^{\frac{p(\delta- \kappa p)}{\delta-p \alpha}}\, d \Ha_\infty^{\delta- \kappa p} \bigg)^{\frac{\delta - p \alpha}{p (\delta - \kappa p) }}
\le c \bigg(\int_{\Rn} |f(x)|^{p} \, d \Ha^{\delta}_\infty \bigg)^{\frac1p }\,,
\end{equation}
where $c$ is a constant which depends only on $n$, $\delta$, $\alpha$,  $\kappa$, and $p$.
\end{theorem}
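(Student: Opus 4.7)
The plan is to combine the pointwise Hedberg inequality in Lemma \ref{lem:Choquet-Hedberg} with Adams' boundedness theorem \cite[Theorem 7]{Adams1998} for the classical fractional maximal operator $M_\kappa$, using the comparison $\M^n_\kappa f \approx M_\kappa f$ from \eqref{compareMM^n} that is available for Lebesgue measurable $f$.

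First, I would apply Lemma \ref{lem:Choquet-Hedberg} with its parameter $\delta$ replaced by $n$ and its exponent $p$ replaced by $q := pn/\delta$. The hypothesis $p \in (\delta/n, \delta/\alpha)$ translates into $q \in (1, n/\alpha)$, and $\kappa \in [0, \alpha)$ is inherited directly. Since $n - q\alpha = (n/\delta)(\delta - p\alpha)$ and $n - \kappa q = (n/\delta)(\delta - \kappa p)$, the pointwise estimate becomes
\[
\Ri_\alpha^n f(x) \le c\, \big(\M^n_\kappa f(x)\big)^{\frac{\delta - p\alpha}{\delta - \kappa p}} \bigg(\int_{\Rn} |f(y)|^{pn/\delta} \, d \Ha_\infty^n(y)\bigg)^{\frac{\delta(\alpha - \kappa)}{n(\delta - \kappa p)}}.
\]
Proposition \ref{GeneralizationOV}, applied with $\delta_1 = \delta$, $\delta_2 = n$ to $|f|^{pn/\delta}$, controls the last integral by $c\,(\int_{\Rn} |f|^p \, d\Ha^{\delta}_\infty)^{n/\delta}$, and once raised to the required power this simplifies to
\[
\Ri_\alpha^n f(x) \le c\, \big(\M^n_\kappa f(x)\big)^{\frac{\delta - p\alpha}{\delta - \kappa p}} \bigg(\int_{\Rn} |f|^{p} \, d \Ha^{\delta}_\infty\bigg)^{\frac{\alpha - \kappa}{\delta - \kappa p}}.
\]

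Next I would raise this pointwise inequality to the power $r := p(\delta-\kappa p)/(\delta - p\alpha)$, which converts the exponent on $\M^n_\kappa f$ into exactly $p$ and produces the constant factor $(\int_{\Rn} |f|^p \, d\Ha^\delta_\infty)^{p(\alpha-\kappa)/(\delta - p\alpha)}$. Integrating against $\Ha_\infty^{\delta - \kappa p}$, invoking \eqref{compareMM^n} to pass from $\M^n_\kappa f$ to $M_\kappa f$, and then applying the Adams inequality
\[
\int_{\Rn} (M_\kappa f)^p \, d \Ha_\infty^{\delta - \kappa p} \le c \int_{\Rn} |f|^p \, d \Ha^{\delta}_\infty
\]
yields a bound of the form $(\int_{\Rn} |f|^p \, d\Ha^\delta_\infty)^{p(\alpha-\kappa)/(\delta - p\alpha) + 1}$, where the exponent collapses via $p(\alpha-\kappa)/(\delta - p\alpha) + 1 = (\delta - \kappa p)/(\delta - p\alpha)$. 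Taking the $1/r$-th power on both sides and using $1/r \cdot (\delta - \kappa p)/(\delta - p\alpha) = 1/p$ produces exactly the desired $1/p$-power of $\int_{\Rn} |f|^p \, d\Ha^\delta_\infty$ on the right-hand side.

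The main obstacle is the exponent bookkeeping: verifying that $q = pn/\delta$ lies in the admissible range for Lemma \ref{lem:Choquet-Hedberg}, that $\delta - \kappa p > 0$ (which follows from $p < \delta/\alpha \le \delta/\kappa$ since $\kappa < \alpha$), and, most importantly, that the hypothesis $p \in (\delta/n, \delta/\alpha)$ falls within the range in which Adams' fractional maximal bound \cite[Theorem 7]{Adams1998} is valid for $M_\kappa$ mapping $\L^p(\Rn,\Ha^\delta_\infty)$ to $\L^p(\Rn,\Ha_\infty^{\delta - \kappa p})$. Once these are in place, the telescoping of the three exponents is a direct algebraic check.
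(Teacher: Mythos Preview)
Your proposal is correct and follows essentially the same route the paper indicates: mimic the proof of Theorem~\ref{thm:Choquet-Hedberg} by applying Lemma~\ref{lem:Choquet-Hedberg} with $\delta$ replaced by $n$ and exponent $q=pn/\delta$, pass from $\Ha^n_\infty$ to $\Ha^\delta_\infty$ via Proposition~\ref{GeneralizationOV}, and then replace the appeal to Theorem~\ref{thm:M-bounded} by the comparison \eqref{compareMM^n} together with Adams' bound \cite[Theorem~7(a)]{Adams1998} for $M_\kappa$, keeping $\kappa$ general rather than setting $\kappa=0$. Your exponent bookkeeping is accurate.
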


\begin{proof}
The proof is similar to the proof of Theorem~\ref{thm:Choquet-Hedberg}.
 Now the equivalence  $\M^n_\kappa f(x) \approx M_\kappa f(x)$  is used and  
\cite[Theorem 7(a)]{Adams1998} is applied
 instead of Theorem~\ref{thm:M-bounded}.
\end{proof}

We point out that
inequality  \eqref{rmk313}  is possibly  valid for a larger class of functions than stated,  namely for functions  $f$ satisfying
the integrability condition 
$ \int_{\Rn} |f(x)|^{pn/\delta} \, d \Ha^{n}_\infty < \infty$.

Next we show a boundedness result to the Hausdorff content  Riesz potential $\Ri_\alpha^{\delta}f$
for any $\delta \in (0,n)$ which can be seen 
as a generalisation of  \cite[Theorem 2.8.4]{Ziemer89}.
The boundedness inequality follows from \cite[Theorem 1.2]{ChenOoiSpector2023}.

\begin{theorem}\label{thm:GeneralRiesz_bdd}
If  $n\geq 1$, $\delta \in (0, n)$, $\alpha \in (0, \delta)$,  and $p \in (1, \delta/\alpha)$
 are given, then 
for  all Lebesgue measurable  functions   $f: \Rn \to [- \infty, \infty]$
\begin{equation*}
\bigg(\int_\Rn \big(\Ri_\alpha^{\delta}f(x) \big)^{\frac{\delta p}{\delta-p \alpha}}\, d \Ha_\infty^{\delta} \bigg)^{\frac{\delta - p \alpha}{\delta p}}
\le c \bigg(\int_{\Rn} |f(x)|^{p} \, d \Ha^{\delta}_\infty \bigg)^{\frac1p }\,,
\end{equation*}
where $c$ is a constant which depends only on $n$, $\delta$, $\alpha$, and $p$.
\end{theorem}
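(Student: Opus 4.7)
The plan is to follow the same two-step pattern used for Theorem~\ref{thm:Choquet-Hedberg} but with the key simplification that the Hausdorff content carrying both the integrand and the Riesz kernel agrees throughout (both equal to $\Ha^\delta_\infty$), so no passage between $\Ha^n_\infty$ and $\Ha^\delta_\infty$ via Proposition~\ref{GeneralizationOV} is needed. Concretely, I would combine the Hedberg-type pointwise estimate from Lemma~\ref{lem:Choquet-Hedberg} with the boundedness of $\M^\delta$ on $\L^p(\Rn,\Ha^\delta_\infty)$ supplied by \cite[Theorem 1.2]{ChenOoiSpector2023}.

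First, I would assume $f\in \L^p(\Rn,\Ha^\delta_\infty)$ (otherwise the right-hand side is infinite) and apply Lemma~\ref{lem:Choquet-Hedberg} with $\kappa=0$; the hypothesis $p\in(1,\delta/\alpha)$ fits the lemma's range $p\in[1,\delta/\alpha)$ with $\kappa\in[0,\alpha)$. This yields the pointwise inequality
\[
\Ri_\alpha^\delta f(x)\le c\,(\M^\delta f(x))^{\frac{\delta-p\alpha}{\delta}}\bigg(\int_\Rn|f(y)|^p\,d\Ha^\delta_\infty\bigg)^{\frac{\alpha}{\delta}}
\]
for every $x\in\Rn$. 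Raising both sides to the power $q:=\frac{\delta p}{\delta-p\alpha}$ gives
\[
(\Ri_\alpha^\delta f(x))^{q}\le c\,(\M^\delta f(x))^{p}\bigg(\int_\Rn|f|^p\,d\Ha^\delta_\infty\bigg)^{\frac{p\alpha}{\delta-p\alpha}},
\]
since $\frac{\delta-p\alpha}{\delta}\cdot q=p$ and $\frac{\alpha}{\delta}\cdot q=\frac{p\alpha}{\delta-p\alpha}$.

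Integrating with respect to $\Ha^\delta_\infty$ over $\Rn$, the factor involving the $\L^p$-norm is a constant in $x$ and so it pulls out, leaving the integral of $(\M^\delta f)^p$. At this point I would invoke \cite[Theorem 1.2]{ChenOoiSpector2023} (applicable because $p>1$ and $\delta<n$) to bound $\int_\Rn(\M^\delta f)^p\,d\Ha^\delta_\infty\le c\int_\Rn|f|^p\,d\Ha^\delta_\infty$. Collecting exponents, $\frac{p\alpha}{\delta-p\alpha}+1=\frac{\delta}{\delta-p\alpha}=\frac{q}{p}$, so
\[
\int_\Rn(\Ri_\alpha^\delta f)^q\,d\Ha^\delta_\infty\le c\bigg(\int_\Rn|f|^p\,d\Ha^\delta_\infty\bigg)^{q/p},
\]
and the conclusion follows by raising both sides to the power $1/q$.

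The main obstacle I foresee is not in the arithmetic of exponents but in the measurability question: Lemma~\ref{lem:Choquet-Hedberg} applies to arbitrary $f$, yet the boundedness of $\M^\delta$ in \cite[Theorem 1.2]{ChenOoiSpector2023} is customarily stated for quasicontinuous (or otherwise well-behaved) functions, and when $\delta<n$ a $\Ha^\delta_\infty$-null set need not be Lebesgue null, so one cannot simply identify a Lebesgue measurable $f$ with a quasicontinuous representative as in Remark~\ref{LebesgueSpace}(2). The cleanest route is to check that the proof of \cite[Theorem 1.2]{ChenOoiSpector2023} extends to Lebesgue measurable $f$ (which the excerpt implicitly claims when it writes that ``the boundedness inequality follows from \cite[Theorem 1.2]{ChenOoiSpector2023}''); an alternative would be to approximate $f$ from below by truncated simple functions built on Lebesgue measurable sets and pass to the limit using the monotonicity property (I5) of the Choquet integral.
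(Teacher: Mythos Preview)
Your proposal is correct and follows essentially the same argument as the paper's proof: apply Lemma~\ref{lem:Choquet-Hedberg}, raise to the appropriate power, integrate, and invoke \cite[Theorem~1.2]{ChenOoiSpector2023} for the boundedness of $\M^\delta$ when $\delta<n$. The only cosmetic difference is that the paper first records the Hedberg estimate for general $\kappa\in[0,\alpha)$ and then sets $\kappa=0$, whereas you set $\kappa=0$ from the outset; your closing caveat about the measurability hypothesis in \cite[Theorem~1.2]{ChenOoiSpector2023} is a fair observation that the paper itself passes over in silence.
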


\begin{proof}
We may assume that $\int_{\Rn} |f(x)|^{p} \, d \Ha^{\delta}_\infty < \infty$.
By Lemma~\ref{lem:Choquet-Hedberg} with $\kappa\in [0,\alpha )$
 for all $x\in\Rn$
\begin{equation*}
\Ri_\alpha^{\delta} f(x)
\le c 
\bigg(\M^{\delta}_\kappa f(x)\bigg)^{\frac{\delta-p\alpha}{\delta-\kappa p}}
\bigg(\int_{\Rn} |f(y)|^{p} \, d \Ha^{\delta}_\infty (y)\bigg)^{\frac{\alpha-\kappa}{\delta -\kappa p}}.
\end{equation*}
Taking both sides to the  power $p(\delta-\kappa p)/(\delta-p\alpha)$ imply that
 for all $x\in\Rn$
\[
\big(\Ri_\alpha^{\delta} f(x) \big)^{\frac{p (\delta- \kappa p) }{\delta-p \alpha}}
\le c(n, \alpha, \delta)
\Big(\M^{\delta}_\kappa f(x)\Big)^p
\bigg( \int_{\Rn} |f(y)|^{p} \, d \Ha^{\delta}_\infty (y)\bigg)^{\frac{p (\alpha- \kappa)}{\delta -  p \alpha}} \,.
\]
By using the boundedness of the maximal function \cite[Theorem 1.2]{ChenOoiSpector2023} with $\kappa =0$
we obtain
\[
\begin{split}
\int_\Rn \big(\Ri_\alpha^{\delta} f(x) \big)^{\frac{p\delta}{\delta-p \alpha}}\, d \Ha_\infty^{\delta}
&\le c \bigg(\int_{\Rn} |f(y)|^{p} \, d \Ha^{\delta}_\infty \bigg)^{\frac{p \alpha}{\delta - p \alpha}}
\int_\Rn \Big(\M^{\delta}f(x)\Big)^{p}\, d \Ha_\infty^{\delta}\\
&\le c \bigg(\int_{\Rn} |f(y)|^{p} \, d \Ha^{\delta}_\infty \bigg)^{\frac1p \frac{p \delta }{\delta - p \alpha}}. 
\end{split}
\]
This gives the result.
\end{proof}

\begin{remark}
(1) Inequality \eqref{rmk313}  with $\delta =n$ and $\alpha =1$ appears in \cite[Remark 3.13]{HH-S_JFA} where the classical $1$-dimensional Riesz potential is used.\\
(2) Theorem~\ref{thm:Choquet-Hedberg-measurable} recovers the classical result for the boundedness of $\alpha$-dimensional Riesz potential
\cite[Theorem 2.8.4]{Ziemer89}, $\alpha \in (0, n)$.
Namely, choosing $\delta=n$ and $\kappa =0$ in inequality \eqref{rmk313} implies
\[
\bigg(\int_\Rn \bigg(\int_\Rn \frac{|f(y)|}{|x-y|^{n- \alpha}} \, dy \bigg)^{\frac{np}{n-p \alpha}}\, dx \bigg)^{\frac{n - p \alpha}{n p}}
\le c \bigg(\int_{\Rn} |f(x)|^{p} \, d x\bigg)^{\frac1p }
\]
for all  Lebesgue measurable functions $f: \Rn \to [- \infty, \infty]$. \\
(3)  
 We show that 
 \cite[Corollary 1.4]{MartinezSpector2021} implies a limit case for Theorem
\ref{thm:Choquet-Hedberg-measurable}.
Let $\delta =n$ in the definition of $\Ri^\delta_\alpha$ where $\alpha \in (0,n)$.
Let $\Omega $ be an open set in $\Rn$ and let us study  compactly supported  Lebesgue measurable functions
$f: \Omega \to [- \infty, \infty]$ such that
$\int_\Omega |f(x)|^{\frac \delta\alpha} \,d \Ha^\delta_\infty \le 1$.
By Proposition~\ref{GeneralizationOV}  also $\int_\Omega |f(x)|^{\frac n\alpha} \, dx \le c(n, \delta)$.
Let $\beta \in (0,n]$ be given.
Applying
\cite[Corollary 1.4]{MartinezSpector2021}  for $p=\delta /\alpha >1$  
yields the following result.
There exist constants $c_1, c_2>0$  that depend on $\alpha$, $n$, $\beta$, and $\Omega$ such that
\begin{equation}\label{limitcase}
\int_\Omega \exp \Big(c_1 \big(\Ri_\alpha^n f(x)\big)^{\frac{n}{n-\alpha}} \Big) \, d \Ha^\beta_\infty < c_2
\end{equation}
for all  compactly supported Lebesgue measurable  functions $f: \Omega \to [- \infty, \infty]$ with
 the integrability condition
$\int_\Omega |f(x)|^{\frac \delta\alpha} \, d\Ha^\delta_\infty\le 1.$
Inequality \eqref{limitcase} can be seen as a limit case to Theorem~\ref{thm:Choquet-Hedberg-measurable} when 
$p= \delta/\alpha$.
\end{remark}

\bibliographystyle{amsalpha}

\end{document}